\documentclass{amsart}
\usepackage{amsmath, amscd, amssymb, amsthm}
\usepackage{bbm}
\usepackage{latexsym}
\usepackage{amsfonts}
\usepackage{graphicx}
\usepackage{xcolor}
\usepackage{needspace}
\definecolor{revieworange}{RGB}{224,112,0}
\definecolor{reviewred}{RGB}{180,35,35}
\definecolor{reviewblue}{RGB}{0,82,170}

\usepackage[all,cmtip]{xy}
\usepackage[colorlinks,linkcolor=blue,breaklinks=true,urlcolor=blue,citecolor=blue,anchorcolor=blue,pagebackref]{hyperref}%
\usepackage{geometry}
\newtheorem{theorem}{Theorem}
\newtheorem{lemma}{Lemma}

\newtheorem{proposition}{Proposition}

\newtheorem{conjecture}{Conjecture}

\newcommand{\R}{{\mathbb R}}

\newcommand{\C}{{\mathbb C}}

\renewcommand*\backref[1]{}
\renewcommand*\backrefalt[4]{ \ifcase #1 \or (cited on page #2) \else (cited on pages #2) \fi}

\newcommand{\be}{\begin{equation}}
\newcommand{\ee}{\end{equation}}
\newcommand{\bea}{\begin{eqnarray}}
\newcommand{\eea}{\end{eqnarray}}

\newcommand{\I}{\mathrm{i}}

\def\XXint#1#2#3{{\setbox0=\hbox{$#1{#2#3}{\int}$ }
\vcenter{\hbox{$#2#3$ }}\kern-.6\wd0}}

\begin{document}

\title[Balanced Bismut torsion-parallel manifold with constant holomorphic sectional curvature]{Balanced Bismut torsion-parallel manifold with constant holomorphic sectional curvature}

\author{Qingsong Wang}
\address{Qingsong Wang. Hal{\i}c{\i}o\u{g}lu Data Science Institute, University of California San Diego, La Jolla, CA 92093, USA}
\email{qswang92@gmail.com}
\thanks{Zheng is the corresponding author. He is partially supported by National Natural Science Foundations of China
with the grant No.12471039 and  12141101, and is supported by the 111 Project D21024.}

\author{Fangyang Zheng}
\address{Fangyang Zheng. School of Mathematical Sciences, Chongqing Normal University, Chongqing 401331, China}
\email{20190045@cqnu.edu.cn; franciszheng@yahoo.com}

\subjclass[2020]{53C55 (primary), 53C05 (secondary)}
\keywords{holomorphic sectional curvature, Chern connection, Bismut connection, Bismut torsion parallel, balanced metric.}

\begin{abstract}
A long-lasting conjecture in non-K\"ahler geometry says that any compact Hermitian manifold with constant holomorphic sectional curvature must be either K\"ahler or Chern flat. The conjecture is known to be true in dimension 2 but remains open in  dimensions 3 or higher, except in several special cases. In this article we consider compact Hermitian manifolds that are Bismut torsion-parallel (BTP for brevity), which means that the Bismut connection has parallel torsion. For BTP manifolds that are not balanced, the conjecture was proved by Chen and Zheng. They also confirmed the conjecture for balanced BTP manifolds in dimension 3, utilizing a classification result for such threefolds by Zhao and Zheng. In a recent work, we confirmed the conjecture for balanced BTP manifolds in dimension 4 through a detailed case-by-case analysis. In this article, we establish the conjecture for balanced BTP manifolds in all dimensions.  We also disprove the full-rank $B$ conjecture by constructing a compact balanced BTP fivefold with full-rank $B$ that is neither K\"ahler nor Chern flat; its Chern holomorphic sectional curvature is nonconstant.
 \end{abstract}

\date{August 1, 2026}

\maketitle

\tableofcontents

\markleft{Qingsong Wang and Fangyang Zheng}
\markright{Balanced BTP manifolds with constant holomorphic sectional curvature}

\section{Introduction}\label{intro}

We begin with a brief discussion on the history of the following long-standing open question in non-K\"ahler geometry, which aims to understand the ``Hermitian space forms":

\begin{conjecture}[{\bf Constant Holomorphic Sectional Curvature Conjecture}] \label{conj1}
Any compact Hermitian manifold with constant holomorphic sectional curvature must be either K\"ahler or Chern flat.
\end{conjecture}

Given a Hermitian manifold $(M^n,g)$ and a non-zero complex tangent vector $X$ of type $(1,0)$, the holomorphic sectional curvature in the direction of $X$ is  defined by
$ H(X) = R_{X\bar{X}X\bar{X}}/|X|^4$,
where $R$ is the curvature tensor of the Chern connection $\nabla$ of $g$.

Recall that complete K\"ahler manifolds with constant holomorphic sectional curvature are called {\em complex space forms}. Their universal covers are the complex projective space ${\mathbb C}{\mathbb P}^n$, the complex Euclidean space ${\mathbb C}^n$, or the complex hyperbolic space ${\mathbb C}{\mathbb H}^n$, equipped with (a constant multiple of) the standard metrics. On the other hand, by the classical result of Boothby \cite{Boothby} in 1958, compact Chern flat manifolds are exactly compact quotients of complex Lie groups (equipped with left-invariant Hermitian metrics). Compact Chern flat manifolds can (and often) be non-K\"ahler when $n\geq 3$ (in the non-compact case there are examples of complete  non-K\"ahler Chern flat Hermitian surfaces \cite{YangZ}).

The conjecture states that any compact Hermitian manifold with $H=c$ must either be K\"ahler (thus a complex space form) or be Chern flat (thus a compact quotient of a complex Lie group). Note that the compactness assumption in the conjecture is a necessary one and there are counterexamples in the non-compact case (\cite{CCN}).

In complex dimension 2, Conjecture \ref{conj1} is known to be true. It started with the work of Balas and Gauduchon (\cite{BG}, \cite{Balas}) in 1985 for the $c\leq 0$ cases, and finished by the work of Apostolov, Davidov, and Muskarov \cite{ADM} in 1996 for the more challenging $c>0$ case.

For $n\geq 3$, the conjecture remains largely open, except in some special cases. The first substantial result towards the conjecture was obtained by Davidov, Grantcharov, and Muskarov \cite{DGM}, who showed among other things that the only twistor space with constant holomorphic sectional curvature is the complex space form ${\mathbb C}{\mathbb P}^3$. For the important special case of locally conformally K\"ahler manifolds, the conjecture was confirmed by the work of H. Chen, L. Chen and Nie \cite{CCN} when $c\leq 0$. The remaining $c>0$ case was resolved by Z. Huang and X. Wan in a recent work \cite{HW}. Besides twistor spaces and locally conformally K\"ahler manifolds, the conjecture was also confirmed for several special types of Hermitian manifolds, by the work of Tang \cite{Tang}, Zhou and Zheng \cite{ZhouZ}, Ma and Nie \cite{MN}, Li and Zheng \cite{LZ}, Rao and Zheng \cite{RZ}.

Bismut torsion-parallel (BTP) manifolds form an interesting class of special Hermitian manifolds. This class contains all Bismut flat manifolds (\cite{WYZ}) as special examples. More generally, all Bismut K\"ahler-like (BKL) manifolds and all Vaisman manifolds are also BTP. Recall that BKL means that the Bismut curvature tensor obeys all K\"ahler symmetries (\cite{ZZCrelle}, \cite{AOUV}), while Vaisman means those locally conformal K\"ahler manifolds whose Lee form is parallel under the Levi-Civita connection. Vaisman manifolds are known to be BTP by the work of Andrada and Villacampa \cite{AndradaV}.

Recall that a Hermitian metric $g$ is said to be {\em balanced} if $d(\omega^{n-1})=0$, where $\omega$ is the K\"ahler form of $g$ and $n$ is the complex dimension of the manifold.  Equivalently, balancedness means that the Chern-torsion trace vanishes.
For a BTP metric with constant Chern holomorphic sectional curvature, this
gives the Bismut--Ricci identity \eqref{eq:S} used below.

 BTP metrics are systematically studied in \cite{ZhaoZ24}. The majority of such manifolds are non-balanced, but there are also balanced ones in complex dimension 3 or higher. In \cite{ZhaoZ25}, Zhao and Zheng give a coarse classification result for compact balanced (but non-K\"ahler) BTP threefolds.

In \cite[Theorem 1.2]{ChenZ26}, S. Chen and Zheng confirmed Conjecture \ref{conj1} for non-balanced BTP manifolds in all dimensions, and for balanced BTP threefolds, utilizing the classification results of \cite{ZhaoZ25}. In \cite{WZ}, we confirmed the conjecture for all balanced BTP fourfolds, by a detailed case-by-case analysis.

The main purpose of this article is to confirm Conjecture \ref{conj1} for balanced BTP manifolds in all dimensions:

\begin{theorem} \label{thm1}
Let $(M^n,g)$ be a compact Hermitian manifold such that $g$ is balanced and Bismut torsion-parallel (BTP). If $g$ has constant Chern holomorphic sectional curvature, then it must be either K\"ahler or Chern flat.
\end{theorem}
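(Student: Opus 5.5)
We will work in a local unitary frame $\{e_i\}$ and use the structure theory of BTP manifolds from \cite{ZhaoZ24}. For a BTP metric the Chern torsion components $T^j_{ik}$ are Bismut-parallel. Hence the $B$-tensor
\[ B_{i\bar j}=\sum_{r,s} T^j_{rs}\overline{T^i_{rs}} \]
is Bismut parallel. Using the Bismut connection's holonomy reduction, we will choose a frame, local on a dense open set and in fact global on $M$, in which $B$ is diagonal with constant eigenvalues. In this frame the only nonzero torsion components occur in a restricted pattern. Balancedness, $\sum_r T^r_{rk}=0$, kills the non-balanced "Vaisman part" treated in \cite{ChenZ26}. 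The compactness and BTP condition then give the key identities of \cite{ZhaoZ24}: the Bismut curvature obeys $R^b_{x\bar y z\bar w}$ symmetries relating it to quadratic expressions in $T$, and $T$ itself satisfies a quadratic (Jacobi-type) identity coming from $d$ of the torsion three-form $H=\sqrt{-1}(\partial-\bar\partial)\omega$ with $\nabla^b H=0$.

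Next, we will write the Chern curvature in terms of the Bismut curvature. Since $\nabla=\nabla^b-\gamma$ with $\gamma$ linear in $T$ and $\nabla^b T=0$, we get
\[ R_{i\bar j k\bar l}=R^b_{i\bar j k\bar l}+Q_{i\bar jk\bar l}(T,\bar T), \]
where $Q$ is an explicit quadratic polynomial in the torsion. We then impose $H\equiv c$, i.e. the symmetrization of $R_{i\bar jk\bar l}$ in $(i,k)$ and $(\bar j,\bar l)$ equals $\frac c2(\delta_{ij}\delta_{kl}+\delta_{il}\delta_{kj})$. Taking traces yields the Bismut--Ricci identity \eqref{eq:S}. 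Combining it with the BTP formula for Bismut Ricci, which for balanced BTP is expressed through $B$, gives an algebraic relation between $c$ and $B$. The goal is to derive two consequences. First, if $c\neq 0$, contracting against a torsion direction gives $c\,|T|^2=-\alpha |T|^2$ with opposite-sign constraints, forcing $T=0$ (Kähler). Second, if $c=0$, the $B$-eigenvalue relations force $R^b$ to be exactly $-Q$ fully, not just after symmetrization, hence Chern flat.

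The main obstacle is the $c=0$, non-Kähler case. Knowing only the symmetrized part of $R$ vanishes, one must upgrade this to full Chern flatness. Here I would first use the Bianchi-type identity for Chern curvature with torsion,
\[ R_{i\bar jk\bar l}-R_{k\bar ji\bar l}=\text{(terms in } \nabla T, T\bar T), \]
where $\nabla T$ is expressed via $\nabla^bT=0$ as quadratic in $T$. This shows that the skew part in $(i,k)$ is determined algebraically by $T$, and similarly for $(\bar j,\bar l)$. So $R$ is entirely determined by $T$ and must equal a specific quadratic tensor $P(T,\bar T)$. The remaining step is algebraic: show $P\equiv 0$ using the torsion Jacobi identity and balancedness, by decomposing $T$ along the eigenspaces of $B$ and proving that each block behaves like the structure constants of a complex Lie algebra (so that the Boothby characterization applies). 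I expect this block-by-block algebra, in arbitrary dimension where the eigenspace pattern of $B$ is not bounded as in the $n=3,4$ analyses, to be the hardest part. I would try an induction on the number of distinct eigenvalues of $B$, peeling off the top eigenspace via the identity $\sum T\bar T$ restricted to it.
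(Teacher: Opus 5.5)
There is a genuine gap: the case $c>0$, which your plan dispatches as a one-line sign argument. You assert that for $c\neq0$ a contraction against a torsion direction gives $c|T|^2=-\alpha|T|^2$ with a sign obstruction. No such identity is exhibited, and a single identity of that form can exclude only one sign of $c$. The contraction that actually exists is the square-sum identity \eqref{eq:square}. From $\pi(L_i)T=0$ (Jacobi), $\pi(S)T=0$ (by \eqref{eq:stabS}) and $4S=2(n+1)c\,I+B-A$ (by \eqref{eq:S}), one gets $\sum_i\|\pi(L_i^*)T\|_0^2=(\pi(B-A)T,T)_0=2(n+1)c\|T\|_0^2$. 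This kills $T$ only when $c<0$; for $c>0$ both sides are nonnegative and nothing follows.

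That case is where essentially all the work lies, and it needs ideas absent from your proposal. One regards $T$ as a pointwise complex Lie algebra on $V=T^{1,0}_xM$, notes that $S$ and every curvature endomorphism $D_{u\bar v}$ are derivations commuting with $S$, and splits by the rank of $B$.
\begin{itemize}
\item For $r_B<n$: one shows $S_N>0$ and that $\lambda_{\min}(S_W)$ is simple with an explicit value. Eigenvalue bounds rule out $[W,W]_c=0$. The case $[W,W]_c\neq0$ is ruled out because the top eigenspace of $S$ on $[W,W]_c$ is central and one-dimensional, and its eigenvalue is computed both as $\frac c3(2n+1)$ and as $nc$.
\item For $r_B=n$: the algebra is perfect but not semisimple, since $\operatorname{tr}S=\frac c2n(n+1)>0$. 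The complement $K=\mathfrak r^\perp$ of the radical is a Levi subalgebra, by a Hodge-type argument ($C=d\bar\phi$ and $d^*C=0$). Then $\sum_{\alpha,\beta\in K}R^b_{\alpha\bar\alpha\beta\bar\beta}$ equals both $0$ and $\frac c2 s(s+1)>0$.
\end{itemize}
The fivefold of Proposition~\ref{prop:full-rank-example} is balanced BTP with Lie-algebra torsion, yet neither K\"ahler nor Chern flat. So general balanced-BTP algebra cannot replace this; the constant-HSC hypothesis has to be used in an essential way, as the paper does through the full formula \eqref{eq:Rb}.

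The case you single out as the main obstacle, $c=0$, is actually short, and your proposed route for it does not work. Showing that torsion blocks are Lie structure constants gives nothing, since for BTP metrics $T$ is always a Lie bracket pointwise. Boothby's theorem also only describes Chern flat manifolds; it does not prove flatness. What closes this case is the following:
\begin{itemize}
\item By \eqref{eq:Rb-R} and \eqref{eq:Rb-Rb}, $R_{i\bar jk\bar\ell}-R_{k\bar ji\bar\ell}=(\pi(L_j^*)T)^\ell_{ik}$.
\item With $c=0$, \eqref{eq:square} forces $\pi(L_j^*)T=0$.
\item Hence $R$ has the K\"ahler symmetries, so it is determined by its holomorphic sectional curvature and therefore vanishes (Tang).
\end{itemize}
Two smaller points: a global frame on $M$ diagonalizing $B$ is not available in general (and is not needed), and compactness plays no role in the argument.
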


We remark that in Theorem \ref{thm1} one can actually drop the compactness or completeness assumption, as the algebraic restriction of balanced BTP plus constant Chern holomorphic sectional curvature is already sufficiently restrictive. In other words we have the following slightly stronger statement:

\begin{theorem} \label{thm2}
Let $(M^n,g)$ be a  balanced BTP manifold with constant Chern holomorphic sectional curvature.  Then $g$ is either K\"ahler or Chern flat.
\end{theorem}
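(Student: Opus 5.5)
The argument is purely pointwise-algebraic. I work in a local unitary frame $\{e_i\}$ and write the Chern torsion as $T(e_i,e_k)=\sum_j T^j_{ik}e_j$.

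\textbf{Step 1: the algebraic structure of balanced BTP metrics.} I first recall the facts established for BTP metrics in \cite{ZhaoZ24}. The Bismut connection $\nabla^b$ satisfies $\nabla^b T=0$. For a BTP manifold the Bismut curvature obeys all K\"ahler symmetries except possibly the first Bianchi one, in the precise form worked out there. The tensor $B_{i\bar j}=\sum_{r,s}T^j_{rs}\overline{T^i_{rs}}$ is $\nabla^b$-parallel, so at each point it can be diagonalized with constant eigenvalues. Balancedness means $\sum_r T^r_{ri}=0$. In the BTP setting the Chern curvature $R$ and the Bismut curvature $R^b$ differ by explicit quadratic expressions in $T$ and $\overline T$, with no derivative terms, because $\nabla^bT=0$ converts all $\nabla T$ terms into quadratic torsion terms.

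\textbf{Step 2: translate $H\equiv c$ into an identity for $T$.} Constant holomorphic sectional curvature means that the symmetrization of $R_{i\bar j k\bar l}$ in $(i,k)$ and in $(j,l)$ equals $\frac c2(\delta_{ij}\delta_{kl}+\delta_{il}\delta_{kj})$. I substitute $R=R^b+Q(T,\overline T)$ and use the Bismut K\"ahler-like-type symmetries of $R^b$ from Step 1. This gives a tensor identity relating the symmetrized $R^b$ to symmetrized quadratic expressions in $T$. I then take the traces, contracting $j=i$, and compare the Chern-Ricci forms. Since $g$ is balanced, the first and second Chern-Ricci tensors and the Bismut-Ricci tensor are related through $B$. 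The result should be the identity \eqref{eq:S} mentioned in the introduction, which I expect to take the form
\[
\mathrm{Ric}^b_{i\bar j}=\frac{(n+1)c}{2}\delta_{ij}-\lambda B_{i\bar j}
\]
for some universal constant $\lambda$. I also compute $\mathrm{Ric}^b$ independently from the BTP structure, using the formula for the Bismut curvature with parallel torsion together with the Bismut Bianchi identity. Comparing the two expressions yields, in the eigenbasis of $B$, a quadratic relation that every eigenvalue $b_i$ of $B$ must satisfy.

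\textbf{Step 3: the eigenvalue analysis.} Let $b_1\ge\dots\ge b_n\ge 0$ be the eigenvalues of $B$. Balancedness together with $\nabla^bT=0$ implies that $\ker B$ is a parallel distribution, and that the torsion restricted to the $B$-eigenspaces has controlled components: $T^j_{ik}\ne0$ forces compatibility among $b_i,b_j,b_k$. The aim is to show that if $T\ne0$ then $c=0$ and the full Chern curvature vanishes. The strategy is as follows. Choose a unit $X$ in the top eigenspace of $B$ and a unit $Y$ in $\ker B$, or in another eigenspace. Evaluate the constant-$H$ identity on $X$, on $Y$, and on $X+tY$, and compare the $t^0,t^2,t^4$ coefficients using the Step 2 relations. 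This should force the eigenvalues of $B$ to satisfy incompatible linear equations unless $c=0$ and all $b_i$ are equal or zero. Finally, with $c=0$, I show that the symmetrized Chern curvature vanishes and that BTP forces $R$ itself to vanish. This uses the fact that the $\nabla^b$-parallel tensors make $R^b$ expressible through $T$, which reduces $R=0$ to a quadratic torsion identity; that identity is then checked to hold using the Jacobi-type identity satisfied by $T$, the BTP analogue of the Jacobi identity of a complex Lie algebra.

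\textbf{Main obstacle.} I expect Step 3 to be the hardest part, specifically when $B$ has several distinct nonzero eigenvalues. There the mixed torsion components $T^j_{ik}$ across eigenspaces have to be ruled out or controlled. My first attempt would be to use the parallelism of $B$ together with the trace identities from Step 2, applied to each eigenspace separately. The hope is that summing the constant-$H$ equation over an orthonormal basis of a single eigenspace gives a weighted identity whose positivity forces either $T=0$ or $c=0$ with a Lie-algebraic structure. The dimension-3 and dimension-4 cases from \cite{ChenZ26,WZ} serve as consistency checks for this argument.
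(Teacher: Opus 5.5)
Your proposal has a genuine gap. Steps 2 and 3 never supply a mechanism that can force $T=0$ or $R=0$.

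\textbf{The traced identity and the polarization step.} The traced identity you predict is wrong. Under balancedness and constant HSC one gets \eqref{eq:S}, namely $S=\frac{c}{2}(n+1)I+\frac14(B-A)$. The $A$-term is exactly what matters, because $B-A=\sum_i[L_i,L_i^{\ast}]$. Evaluating the constant-HSC identity on $X$, $Y$ and $X+tY$ only recovers the polarized tensor identity, which is already fully encoded in the formula \eqref{eq:Rb} for $R^b$. Comparing the $t^0,t^2,t^4$ coefficients therefore yields no new equations.

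\textbf{The missing ingredient.} What you are missing is the stability identity \eqref{eq:stab}: every Bismut curvature endomorphism acts as a derivation of the pointwise torsion Lie algebra $[\,,\,]_c$, and in particular $\pi(S)T=0$. Combine this with $\pi(L_i)T=0$ (Jacobi) and $(\pi(\phi))^{\ast}=\pi(\phi^{\ast})$. This gives the square-sum identity $\sum_i\|\pi(L_i^{\ast})T\|_0^2=2(n+1)c\|T\|_0^2$, which disposes of $c<0$ at once. Your compatibility rule ``$T^j_{ik}\neq0$ constrains $b_i,b_j,b_k$'' is aimed at the wrong tensor. It is $S$, not $B$, that is a derivation, and \eqref{eq:stabS2} gives $\lambda_j=\lambda_i+\lambda_k$ for the eigenvalues of $S$.

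\textbf{The case $c=0$.} Your argument here is incorrect as stated. With $c=0$, \eqref{eq:Rb} and \eqref{eq:Rb-R} make $R$ a quadratic expression in $T$. That expression does not vanish merely because $T$ satisfies the Jacobi identity. What is needed is $\pi(L_j^{\ast})T=0$, whose components are exactly $R_{i\bar jk\bar\ell}-R_{k\bar ji\bar\ell}$. This follows from the square-sum identity with $c=0$, not from Jacobi. Chern K\"ahler-likeness together with $H\equiv0$ then gives $R=0$.

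\textbf{The case $c>0$.} Most seriously, your plan offers nothing here. In this case the square-sum identity is perfectly consistent, so no positivity argument of the kind you hope for can conclude. This case is the bulk of the paper and needs genuinely Lie-theoretic input.
\begin{itemize}
\item For $0<r_B<n$, the paper runs an $S$-eigenvalue analysis: $S_N$ is positive, and the minimal eigenvalue of $S_W$ is simple by a determinant-pencil argument. It then excludes $[W,W]_c=0$ and $[W,W]_c\neq0$ separately.
\item For $r_B=n$, the algebra is perfect but not semisimple, since $\operatorname{tr}S>0$. One proves that $K=\mathfrak r^\perp$ is a Levi subalgebra by showing the obstruction cochain is both exact and coclosed. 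Curvature derivations induce traceless derivations of the semisimple quotient, so $\sum_{\alpha,\beta}R^b_{\alpha\bar\alpha\beta\bar\beta}$ over $K$ must vanish. Yet it is computed to be $\frac{c}{2}s(s+1)>0$, a contradiction.
\end{itemize}
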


 Zhao and the second named author conjectured that a compact balanced
BTP metric with full-rank $B$ must be Chern flat; see Conjecture
\ref{conj2}. Section \ref{sec:full-rank-example} gives a compact balanced BTP
fivefold with full-rank $B$
which is neither K\"ahler nor Chern flat. Its Chern holomorphic sectional
curvature is nonconstant, showing that the constant holomorphic sectional
curvature hypothesis in Theorem \ref{thm2} is essential.

 In \cite{PodestaZ}, Podest\`a and Zheng showed that compact Chern flat BTP manifolds are exactly the compact quotients of reductive complex Lie groups.

The article is organized as follows. In \S 2, we will set up the notations and collect some known results in the literature that will be used later.  In \S 3 we will prove Theorem \ref{thm2} for the cases $c<0$ and $c=0$. In \S 4, we will deal with the case $c>0$ and $r_B<n$, where $r_B$ is the rank of the $B$ tensor.  \S 5 is devoted to the most challenging case of $c>0$ and $r_B=n$.   Section \ref{sec:full-rank-example} gives the counterexample to Conjecture \ref{conj2}.

\vspace{0.3cm}

\section{Preliminaries}

Let $(M^n,g)$ be a Hermitian manifold. For convenience, we will write the metric $g=\langle , \rangle$ as a symmetric complex bilinear form. Let $\nabla$, $\nabla^b$ be the Chern and Bismut connection of $g$, and denote by $T$, $T^b$ and $R$, $R^b$ the torsion and curvature of $\nabla$ and $\nabla^b$, respectively.

Let $e=\{ e_1, \ldots , e_n\}$ be a local unitary frame in $M^n$, namely, each $e_i$ is a local complex tangent vector field of type $(1,0)$, and $\langle e_i, \overline{e}_j\rangle =\delta_{ij}$ for all $1\leq i,j\leq n$. We will denote by $T^j_{ik}$ the Chern torsion components under the frame $e$, namely, $T(e_i,e_k)=\,\sum_{j=1}^nT^j_{ik} e_j$. As is well-known, $T(e_i,\overline{e}_k)=0$. Also, denote by $R_{i\bar{j}k\bar{\ell}}$ the components of the Chern curvature, and by $R^b_{ijk\bar{\ell}}$,  $R^b_{i\bar{j}k\bar{\ell}}$ the components of the Bismut curvature. When the metric $g$ is Bismut torsion-parallel (or BTP for brevity), which means $\nabla^bT^b=0$,  the results of Zhao and Zheng \cite{ZhaoZ24} give the following identities.

\begin{lemma} \label{lemma1}
Let $(M^n,g)$ be a Hermitian manifold. Then $g$ is BTP if and only if  $\nabla^bT=0$. In this case, under any local unitary frame $e$ and for all $1\leq i,j,k,\ell \leq n$  we have
\begin{eqnarray}
&& R^b_{ijk\bar{\ell}} \ = \ 0, \ \ \ \ \ \ R^b_{i\bar{j}k\bar{\ell}} \ = \ R^b_{k\bar{\ell}i\bar{j}} \,,  \label{eq:Rbsym}\\
&& \sum_{s} \big( T^{\ell}_{is} T^s_{jk} +  T^{\ell}_{js} T^s_{ki} + T^{\ell}_{ks} T^s_{ij}  \big) \ = \ 0, \label{eq:Jacobi} \\
&& R^b_{i\bar{j}k\bar{\ell}} - R_{i\bar{j}k\bar{\ell}} \ = \ \sum_s \big\{ T^{\ell}_{is} \overline{T^k_{js} } -  T^{s}_{ik} \overline{T^s_{j\ell } } - T^{j}_{is} \overline{T^k_{\ell s} } - T^{\ell}_{ks} \overline{T^i_{js} }  \big\}, \label{eq:Rb-R}\\
&&  R^b_{i\bar{j}k\bar{\ell}} - R^b_{k\bar{j}i\bar{\ell}} \ = \ \sum_s \big\{  T^{\ell}_{is} \overline{T^k_{js} } +  T^{j}_{ks} \overline{T^i_{\ell s} } -  T^{s}_{ik} \overline{T^s_{j\ell } } - T^{j}_{is} \overline{T^k_{\ell s} } - T^{\ell}_{ks} \overline{T^i_{js} }  \big\}. \label{eq:Rb-Rb}
\end{eqnarray}
\end{lemma}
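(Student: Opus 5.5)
We work throughout in a local unitary frame $e$ and use the standard relation between the Bismut and Chern connections,
\[
\nabla^b = \nabla + \gamma, \qquad T^b = T + \overline{T} \ \text{(suitably contracted)},
\]
where the tensor $\gamma$ is built algebraically from $T$ and $\overline{T}$. Concretely, in the frame $e$ one has
\[
\gamma_{ij} = \sum_k \big( T^j_{ik}\varphi_k - \overline{T^i_{jk}}\,\overline{\varphi}_k \big),
\]
where $\varphi$ is the coframe dual to $e$. The proof then has three parts: the equivalence of BTP with $\nabla^bT=0$, the first Bianchi identity, and the curvature comparison formulas.

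\textbf{Equivalence.} The Bismut torsion is $T^b(X,Y)=\nabla^b_XY-\nabla^b_YX-[X,Y]$. Its $(1,0)$ and $(0,1)$ components are, up to fixed constants, $T$ and $\overline{T}$. The precise identity is that $H=-JdJ\omega$ is the totally skew Bismut torsion, and that its $(2,1)$ and $(1,2)$ parts are $\overline{\partial}\omega$ and $\partial\omega$, with components $T^j_{ik}$. Since $\nabla^b$ preserves $J$ and $g$, it preserves the type decomposition. Hence $\nabla^bT^b=0$ if and only if $\nabla^bT=0$, because $T^b$ is $T$ plus its conjugate, lowered by the metric.

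\textbf{First Bianchi and the Jacobi identity.} For a connection with torsion, the first Bianchi identity reads
\[
\mathfrak{S}\, R^b(X,Y)Z = \mathfrak{S}\big\{ (\nabla^b_XT^b)(Y,Z) + T^b(T^b(X,Y),Z) \big\},
\]
where $\mathfrak{S}$ denotes the cyclic sum over $X,Y,Z$. We put $\nabla^bT^b=0$ and take $X,Y,Z=e_i,e_j,e_k$, all of type $(1,0)$.

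\begin{itemize}
\item The $(1,0)$-part of the right-hand side is $\sum_s \mathfrak{S}\, T^s_{ij}T^\ell_{sk}$.
\item Since $R^b(X,Y)$ is a $(1,1)$-endomorphism, $R^b_{ij}$ is the $(2,0)$-part of the curvature form.
\end{itemize}

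To get a second relation, we use the structure equation $d\theta^b - \theta^b\wedge\theta^b = \Theta^b$. Because $\nabla^b\gamma=0$, we have $\Theta^b = \Theta + d\gamma + \ldots$, and $d\gamma$ is computable from $dT$ via $\nabla^bT=0$. The $(2,0)$-part of $\Theta^b$ is then expressed by $T\cdot T$ terms. Comparing the two expressions and using the cyclic identity forces both $R^b_{ijk\bar\ell}=0$ and \eqref{eq:Jacobi}. Equivalently, one cites the Zhao--Zheng computation: applying $\overline\partial$ to the $(2,0)$-curvature, or taking the Bianchi identity with mixed-type arguments, shows that $R^b(e_i,e_j)$ is given by $T$-quadratic terms whose skew part must vanish.

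\textbf{Curvature comparison and symmetry.} For \eqref{eq:Rb-R}, we compute $\Theta^b-\Theta = d\gamma - \gamma\wedge\gamma - \theta\wedge\gamma - \gamma\wedge\theta$. We then replace $d\varphi$ by the Chern structure equation $d\varphi = -{}^t\theta\wedge\varphi + \tau$, where $\tau$ is the torsion $2$-form. Using $\nabla^bT=0$, the covariant derivative of $T$ under $\nabla$ equals $-\gamma\cdot T$, which is again quadratic. Collecting the $(1,1)$-components gives exactly the four quadratic terms in \eqref{eq:Rb-R}.

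Next, \eqref{eq:Rb-Rb} comes from the $(1,1)$-part of the first Bianchi identity, evaluated with $X=e_i$, $Y=\overline{e}_j$, $Z=e_k$. There the Chern-type torsion $T^b(e_i,\overline{e}_j)$ involves $\overline{T}$, and it produces the extra term $T^j_{ks}\overline{T^i_{\ell s}}$.

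Finally, the pair symmetry $R^b_{i\bar jk\bar\ell}=R^b_{k\bar\ell i\bar j}$ follows from the general identity
\[
R^b(X,Y,Z,W)-R^b(Z,W,X,Y) = \text{(terms in } \nabla^bH \text{ and } dH\text{)}.
\]
When $\nabla^bH=0$ one has $dH = 2\sigma_H$, which is quadratic in $H$, and the pair-symmetry defect vanishes exactly.

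\textbf{Main obstacle.} The hardest step is the bookkeeping of signs and index placements in the $d\gamma$ computation. I would carry it out carefully in the unitary frame, and cross-check against the known K\"ahler case $T=0$ and against the Bismut-flat examples.
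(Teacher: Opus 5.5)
The paper gives no proof of this lemma; it quotes the identities from Zhao--Zheng \cite{ZhaoZ24}. Your proposal replaces that citation with a self-contained derivation, and the derivation is correct. It uses $\nabla^b=\nabla+\gamma$, the structure equations, the first Bianchi identity for a connection with parallel torsion, and pair symmetry for parallel skew torsion, which is essentially the frame computation behind the cited reference.

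I checked the steps where you only announce the outcome:
\begin{itemize}
\item With your $\gamma$ and $\nabla^bT=0$, the $(1,1)$-part of $D\gamma-\gamma\wedge\gamma$ evaluated on $(e_i,\bar e_j)$ gives exactly the four terms of \eqref{eq:Rb-R}.
\item The $e_\ell$-component of $\mathfrak S\,R^b(X,Y)Z=\mathfrak S\,T^b(T^b(X,Y),Z)$ at $(e_i,\bar e_j,e_k)$ gives exactly the five terms of \eqref{eq:Rb-Rb}. Here the term $R^b(e_k,e_i)\bar e_j$ drops out for type reasons.
\item For the $(2,0)$-part, your phrase ``comparing the two expressions forces both'' hides the coefficient that makes the argument close. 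Put $J(x,y,z)=T(T(x,y),z)+T(T(y,z),x)+T(T(z,x),y)$. Under BTP, the $(1,0)$ Chern derivative $T^j_{ik,p}$ is the $e_j$-component of $J(e_i,e_p,e_k)$. The structure equation then gives $R^b(e_a,e_b)e_i=J(e_a,e_b,e_i)$, which is already cyclically symmetric. The Bianchi identity says that the cyclic sum equals $J(e_a,e_b,e_i)$. Hence $3J=J$, so both $J$ and $R^b_{ijk\bar\ell}$ vanish.
\end{itemize}

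That computation can in fact be skipped. Once you have pair symmetry (which you establish anyway), you get $R^b(e_i,e_j,e_k,\bar e_\ell)=R^b(e_k,\bar e_\ell,e_i,e_j)=\langle R^b(e_k,\bar e_\ell)e_i,e_j\rangle=0$. This holds because $R^b(e_k,\bar e_\ell)$ preserves $T^{1,0}M$ and $\langle e_s,e_j\rangle=0$. The all-$(1,0)$ Bianchi identity is then literally \eqref{eq:Jacobi}.

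One small correction to your pair-symmetry remark: in the standard combination of Bianchi defects, the $4$-form contributions $dH$ and $\sigma_H$ cancel. The defect therefore involves only $\nabla^bH$, and the relation $dH=2\sigma_H$ plays no role.
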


Write $A_{i\bar{j}} = \sum_{s,t} T^s_{it} \overline{ T^s_{jt}}$, $B_{i\bar{j}} = \sum_{s,t} T^j_{st} \overline{ T^i_{st}}$. Then $A$ and $B$ are $(1,1)$ tensors globally defined on $M$, and both are Hermitian symmetric. When $g$ is BTP, we have $\nabla^bA=\nabla^bB=0$, so all eigenvalues of $A$ and $B$ are non-negative constants. Denote by $S_{i\bar{j}}=\sum_k R^b_{i\bar{j}k\bar{k}}$ the first Bismut Ricci tensor.

Using the metric $g$, we may regard $A$ as an endomorphism on the holomorphic tangent bundle $T^{1,0}M$ which maps $e_i$ to $\sum_j A_{i\bar{j}}e_j$ where $e$ is unitary. By an abuse of notation, we will denote this endomorphism still as $A$. Similarly, denote by $B$ (or $S$) the endomorphism of $T^{1,0}M$ corresponding to the $(1,1)$ tensor $B$ (or $S$).  The following commutation relation is proved in \cite[Proposition 1.5]{ZhaoZ24}.

\begin{lemma} \label{lemma2}
Let $(M^n,g)$ be a BTP manifold. Then  $A$   and  $B$   commute.
\end{lemma}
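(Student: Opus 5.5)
The plan is to extract the commutation relation from the algebraic consequences of parallelism, namely the Ricci identity for the parallel tensor $T$, combined with the curvature identities of Lemma~\ref{lemma1}. Since $\nabla^bT=0$ by Lemma~\ref{lemma1}, the curvature $R^b$ acts on $T$ as a derivation that annihilates it. Concretely, for all indices $x,y,i,k,\ell$ under a unitary frame $e$,
\begin{equation}
\sum_s \big( T^{\ell}_{sk} R^b_{x\bar{y} i\bar{s}} + T^{\ell}_{is} R^b_{x\bar{y} k\bar{s}} - T^{s}_{ik} R^b_{x\bar{y} s\bar{\ell}} \big) \ = \ 0 . \label{eq:derivT}
\end{equation}
The $(2,0)$ part of the curvature vanishes by \eqref{eq:Rbsym}, so only the $(1,1)$ components $R^b_{x\bar y}$ occur here. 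The same argument shows that the endomorphisms $A$ and $B$, being built from $T$ and $\overline{T}$, commute with every $R^b_{x\bar y}$.

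Step one is to contract \eqref{eq:derivT} against conjugated torsion. One contraction uses $\overline{T^{\ell}_{jk}}$, summed over $k$ and $\ell$. The other uses $\overline{T^{m}_{ik}}$, summed over $i$ and $k$, to reach the $B$ side. The terms in which $R^b$ carries the free index then pair with $A$ or $B$, while the remaining terms are contractions of $R^b$ with a quadratic expression in $T$ and $\overline T$.

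Step two is to trace over $x=y$ and swap the curvature index pairs using the symmetry $R^b_{i\bar j k\bar \ell}=R^b_{k\bar\ell i\bar j}$ from \eqref{eq:Rbsym}. This yields relations that express $SA$ and $SB$, together with their adjoints, in terms of mixed contractions of $R^b$ with $T\overline{T}$.

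Step three is to remove the curvature altogether. I would use \eqref{eq:Rb-Rb}, which gives the antisymmetrization of $R^b$ in its two unbarred slots as an explicit quartic in $T$, together with \eqref{eq:Rb-R} where needed. The mixed curvature contractions should then reduce to quartic torsion expressions.

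Step four is to simplify those quartics with the Jacobi identity \eqref{eq:Jacobi}. Every quartic term should cancel except $\sum_j (A_{i\bar j}B_{j\bar k} - B_{i\bar j}A_{j\bar k})$. Comparing with the Hermitian symmetry of $S$ then forces $[A,B]=0$.

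I expect the main obstacle to be the bookkeeping in steps three and four: choosing the contraction in step one so that, after using \eqref{eq:Rb-Rb}, exactly the commutator $[A,B]$ survives and all other quartic terms cancel. If the direct choice fails, I would first try contracting the Jacobi identity \eqref{eq:Jacobi} itself against $\overline{T}\,\overline{T}$ before bringing in any curvature. That gives a purely algebraic quartic identity, which may already identify $AB$ with $BA$ once combined with the Hermitian symmetry of $A$ and $B$.
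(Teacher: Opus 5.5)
There is a genuine gap. The proposal never identifies where $[A,B]$ actually comes from, and the tool you assign to Step three cannot do what you ask of it.

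Together with the pair symmetry in \eqref{eq:Rbsym}, identity \eqref{eq:Rb-Rb} determines $R^b$ only up to adding a tensor with all K\"ahler symmetries (e.g.\ a multiple of $\delta_{ij}\delta_{k\ell}+\delta_{i\ell}\delta_{kj}$). So substitution can never ``remove the curvature altogether'' from a contraction that does not annihilate such tensors. Your Step two produces exactly such a contraction. After tracing $x=y$ in \eqref{eq:stab}, pair symmetry leaves only $S$. Steps one and two therefore extract nothing beyond $\pi(S)T=0$ and $[S,A]=[S,B]=0$, which hold with $S=0$ for every torsion tensor. Moreover, \eqref{eq:Rb-Rb} can only trade $S$ for another curvature trace.

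What actually eliminates the curvature is the fact you state and then never use, namely $[D_{x\bar y},A]=0$. Via pair symmetry it becomes $D_{Az\,\overline{w}}=D_{z\,\overline{Aw}}$. Evaluating in an eigenframe of $A$, this shows that $S=\sum_kD_{e_k\bar e_k}$ commutes with $A$, and so does the cross trace $\rho_{i\bar\ell}:=\sum_kR^b_{i\bar kk\bar\ell}$, i.e.\ the endomorphism $x\mapsto\sum_kD_{x\bar e_k}e_k$. Now put $j=k$ in \eqref{eq:Rb-Rb} and sum. The two $A$-terms cancel, and
\[
\rho_{i\bar\ell}-S_{i\bar\ell}=-B_{i\bar\ell}+\Phi_{i\bar\ell},\qquad \Phi_{i\bar\ell}:=\sum_s\big(T^\ell_{is}\overline{\eta_s}+\eta_s\overline{T^i_{\ell s}}\big),\quad \eta_s:=\sum_kT^k_{ks}.
\]
Hence $[A,B]=[A,\Phi]$, which vanishes when $g$ is balanced. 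That is the only case this paper uses; the paper itself just cites \cite[Proposition 1.5]{ZhaoZ24}. Neither the contraction of \eqref{eq:stab} against $\overline T$ nor \eqref{eq:Jacobi} is needed.

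There are two further problems.

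\textbf{Non-balanced case.} The lemma is stated for all BTP metrics, and every nonzero trace of \eqref{eq:Rb-Rb} produces, besides $\pm B$, the $\eta$-terms $\pm\Phi$. Your projected outcome, that every quartic term cancels except $[A,B]$, ignores them. The non-balanced case therefore needs a separate argument that $[A,\Phi]=0$, and your plan supplies none. Such an argument would presumably use $\nabla^b\eta=0$, which makes $R^b$ vanish whenever the dual vector of $\eta$ is inserted.

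\textbf{The fallback.} It cannot succeed, because \eqref{eq:Jacobi} together with Hermitian symmetry never implies $[A,B]=0$. On $\C^3$ with orthonormal basis $e_1,e_2,e_3$, let $[e_1,e_2]_c=e_1+e_3$ and let all other basic brackets vanish. The Jacobi identity holds, $A=\operatorname{diag}(2,2,0)$, and $B_{i\bar j}=2w_iw_j$ with $w=(1,0,1)$. Then $(AB)_{13}=4\neq0=(BA)_{13}$. Any proof must therefore feed genuine curvature information into the argument, such as the commutation of the cross trace $\rho$ with $A$.
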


 Now let us assume that the BTP metric  $g$ also has constant Chern holomorphic sectional curvature $c$. This means that the symmetrization of the Chern curvature $R$ is a constant multiple of the curvature of the complex space form ${\mathbb C}{\mathbb P}^n$. This together with the properties (\ref{eq:Rb-R}) and (\ref{eq:Rb-Rb})  gives the following formula \cite[Lemma 8]{ChenZ26}:

\begin{lemma} \label{lemmaRb}
Let $(M^n,g)$ be a BTP manifold whose Chern holomorphic sectional curvature is equal to a constant $c$. Then under any local unitary frame $e$,
\begin{equation} \label{eq:Rb}
R^b_{i\bar{j}k\bar{\ell}} = \frac{c}{2} \big( \delta_{ij}\delta_{k\ell} + \delta_{i\ell}\delta_{kj} \big) - \frac{1}{2}\sum_{s=1}^n T^s_{ik} \overline{ T^s_{j\ell }} - \frac{3}{4}\sum_{s=1}^n \big( T^j_{is} \overline{ T^k_{\ell s}} + T^{\ell}_{ks} \overline{ T^i_{j s}}  \big) + \frac{1}{4}\sum_{s=1}^n \big( T^{\ell}_{is} \overline{ T^k_{j s}} + T^{j}_{ks} \overline{ T^i_{\ell s}}  \big),
\end{equation}
for any $1\leq i,j,k,\ell \leq n$.
\end{lemma}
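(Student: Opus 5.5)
We need to prove Lemma \ref{lemmaRb}: the formula for $R^b$ under constant holomorphic sectional curvature $c$.

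The plan is to write $R^b = R + Q$, where $Q_{i\bar jk\bar\ell}$ denotes the right-hand side of \eqref{eq:Rb-R}. The Chern curvature $R$ is not symmetric in $i,k$, but $R^b$ has the K\"ahler-type symmetry $R^b_{i\bar jk\bar\ell}=R^b_{k\bar\ell i\bar j}$ from \eqref{eq:Rbsym}. Its failure to be symmetric in $(i,k)$ is the explicit torsion expression \eqref{eq:Rb-Rb}. Combining the two gives the antisymmetric part of $R^b$ in $(i,k)$, and likewise in $(j,\ell)$ by conjugate symmetry. Taking the conjugate of \eqref{eq:Rbsym} together with $\overline{R^b_{i\bar jk\bar\ell}}=R^b_{j\bar i\ell\bar k}$ shows that $R^b_{i\bar jk\bar\ell}-R^b_{i\bar\ell k\bar j}$ is also explicit. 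So the only unknown piece of $R^b$ is its totally symmetrized part
\[
\hat R^b_{i\bar jk\bar\ell}=\tfrac14\big(R^b_{i\bar jk\bar\ell}+R^b_{k\bar ji\bar\ell}+R^b_{i\bar\ell k\bar j}+R^b_{k\bar\ell i\bar j}\big).
\]

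To determine $\hat R^b$, symmetrize $R^b=R+Q$. The constant holomorphic sectional curvature hypothesis says exactly that the full symmetrization $\hat R$ of the Chern curvature equals $\frac c2(\delta_{ij}\delta_{k\ell}+\delta_{i\ell}\delta_{kj})$. This is the standard polarization fact: a tensor with the symmetries of $\hat R$ is determined by its values $\hat R_{X\bar XX\bar X}=R_{X\bar XX\bar X}$. Hence
\[
\hat R^b=\tfrac c2(\delta_{ij}\delta_{k\ell}+\delta_{i\ell}\delta_{kj})+\hat Q.
\]
Then reconstruct $R^b$ as $\hat R^b$ plus the antisymmetric corrections:
\[
R^b_{i\bar jk\bar\ell}=\hat R^b_{i\bar jk\bar\ell}+\tfrac14\Big[(R^b_{i\bar jk\bar\ell}-R^b_{k\bar ji\bar\ell})+(R^b_{i\bar jk\bar\ell}-R^b_{i\bar\ell k\bar j})+(R^b_{i\bar jk\bar\ell}-R^b_{k\bar\ell i\bar j})\Big]
+\tfrac14\Big[(R^b_{k\bar ji\bar\ell}-R^b_{k\bar\ell i\bar j})\Big]\cdot 0+\cdots
\]
More cleanly, use the identity $4X_{ik,j\ell}=\sum_{\text{sym}}X+\big(X-X^{(ik)}\big)+\big(X-X^{(j\ell)}\big)+\big(X^{(ik)}-X^{(ik)(j\ell)}\big)+\cdots$, and simplify the last difference using $R^b_{k\bar\ell i\bar j}=R^b_{i\bar jk\bar\ell}$. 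The result is
\[
R^b=\hat R^b+\tfrac12\,\mathrm{Alt}_{ik}+\tfrac12\,\mathrm{Alt}_{j\ell},
\]
where $\mathrm{Alt}_{ik}=R^b_{i\bar jk\bar\ell}-R^b_{k\bar ji\bar\ell}$ is given by \eqref{eq:Rb-Rb}, and $\mathrm{Alt}_{j\ell}=R^b_{i\bar jk\bar\ell}-R^b_{i\bar\ell k\bar j}$ is its conjugate-transposed counterpart.

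The main obstacle is the bookkeeping of the $T\overline T$ terms. $\hat Q$ contains four index-permuted copies of each of the four terms in \eqref{eq:Rb-R}, and these must be combined with the two alternation terms. The terms $T^s_{ik}\overline{T^s_{j\ell}}$ vanish under symmetrization in $(i,k)$. The remaining terms pair up into $T^j_{is}\overline{T^k_{\ell s}}$-type and $T^\ell_{is}\overline{T^k_{js}}$-type groups. I would verify the coefficients $-\frac12$, $-\frac34$, $\frac14$ by collecting each monomial type separately, using the antisymmetry $T^s_{ik}=-T^s_{ki}$ to normalize them. As a sanity check, the final formula should satisfy \eqref{eq:Rbsym} and \eqref{eq:Rb-Rb}, and its symmetrization should equal $\frac c2(\cdots)+\hat Q$.
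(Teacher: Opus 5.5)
Your strategy is the one the paper indicates (it only cites \cite[Lemma 8]{ChenZ26}): constant HSC fixes the total symmetrization of $R$, and \eqref{eq:Rbsym}, \eqref{eq:Rb-R}, \eqref{eq:Rb-Rb} determine everything else. However, the reconstruction identity you settle on, $R^b=\hat R^b+\tfrac12\mathrm{Alt}_{ik}+\tfrac12\mathrm{Alt}_{j\ell}$, is off by a factor of $2$. Following it would not produce \eqref{eq:Rb}.

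Write $X=R^b_{i\bar jk\bar\ell}$, $X^{(ik)}=R^b_{k\bar ji\bar\ell}$, $X^{(j\ell)}=R^b_{i\bar\ell k\bar j}$. By \eqref{eq:Rbsym} you have $R^b_{k\bar\ell i\bar j}=X$, and also $X^{(j\ell)}=R^b_{i\bar\ell k\bar j}=R^b_{k\bar ji\bar\ell}=X^{(ik)}$. Hence $\mathrm{Alt}_{j\ell}=\mathrm{Alt}_{ik}$, $\hat X=\tfrac12(X+X^{(ik)})$, and
\[
R^b=\hat R^b+\tfrac14\bigl(\mathrm{Alt}_{ik}+\mathrm{Alt}_{j\ell}\bigr)=\hat R^b+\tfrac12\,\mathrm{Alt}_{ik}.
\]
This is what the first bracket of your initial display already gives, since its third difference vanishes. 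Your ``cleaner'' identity also has the wrong last correction: it should be $X-X^{(ik)(j\ell)}=0$, not $X^{(ik)}-X^{(ik)(j\ell)}$. With your coefficients you would get $\hat R^b+\mathrm{Alt}_{ik}$. Its $(i,k)$-antisymmetric part is $2\,\mathrm{Alt}_{ik}$, contradicting \eqref{eq:Rb-Rb}, and the torsion coefficients would come out as $-1,-\tfrac54,\tfrac34$ instead of $-\tfrac12,-\tfrac34,\tfrac14$.

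Since the bookkeeping is the whole content of the lemma, you should carry it out; with the corrected identity it is short. Abbreviate
\[
a=\sum_sT^s_{ik}\overline{T^s_{j\ell}},\quad b=\sum_sT^j_{is}\overline{T^k_{\ell s}},\quad b'=\sum_sT^\ell_{ks}\overline{T^i_{js}},\quad d=\sum_sT^\ell_{is}\overline{T^k_{js}},\quad d'=\sum_sT^j_{ks}\overline{T^i_{\ell s}}.
\]
The swap $i\leftrightarrow k$ acts by $a\mapsto -a$, $b\leftrightarrow d'$, $b'\leftrightarrow d$. The swap $j\leftrightarrow\ell$ acts by $a\mapsto -a$, $b\leftrightarrow d$, $b'\leftrightarrow d'$. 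The right side of \eqref{eq:Rb-R} is $Q=d-a-b-b'$. Averaging over the four permutations gives $\hat Q=-\tfrac14(b+b'+d+d')$, with the $a$-terms cancelling. Meanwhile \eqref{eq:Rb-Rb} reads $\mathrm{Alt}_{ik}=d+d'-a-b-b'$. Therefore
\[
R^b=\tfrac c2(\delta_{ij}\delta_{k\ell}+\delta_{i\ell}\delta_{kj})-\tfrac14(b+b'+d+d')+\tfrac12(d+d'-a-b-b'),
\]
which is exactly \eqref{eq:Rb}.
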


When $g$ is balanced, which means $\sum_s T^s_{si}=0$ for all $i$, by letting $k=\ell$ and summing it up from $1$ to $n$, the above formula yields \cite[Lemma 2]{WZ}:

\begin{lemma}[\cite{WZ}] \label{lemmaS}
Let $(M^n,g)$ be a balanced BTP manifold with constant Chern holomorphic sectional curvature $c$. Then under any local unitary frame $e$, the first Bismut Ricci curvature tensor $S$ has components
\begin{equation} \label{eq:S}
S_{i\bar{j}} : = \sum_{k=1}^n R^b_{i\bar{j}k\bar{k}} = \frac{c}{2}(n+1) \delta_{ij} + \frac{1}{4}\big( B_{i\bar{j}} - A_{i\bar{j}}  \big), \ \ \ \ \ \ \ \forall \ 1\leq i,j\leq n.
\end{equation}
\end{lemma}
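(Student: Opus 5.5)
Starting from Lemma \ref{lemmaRb}, I will set $k=\ell$ and sum over $k$ from $1$ to $n$, and then simplify the four torsion terms using the balanced condition $\sum_s T^s_{sk}=0$ together with skew-symmetry $T^s_{ik}=-T^s_{ki}$. The constant term gives $\frac{c}{2}\sum_k(\delta_{ij}+\delta_{ik}\delta_{kj})=\frac{c}{2}(n+1)\delta_{ij}$, and I expect each remaining sum to become either $A_{i\bar j}$, $B_{i\bar j}$, or zero.

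I go through the terms one at a time. The first is $-\frac12\sum_{k,s}T^s_{ik}\overline{T^s_{jk}}=-\frac12 A_{i\bar j}$, straight from the definition of $A$. In the second group, $T^j_{is}\overline{T^k_{ks}}$ is zero after summing over $k$, because $\sum_k T^k_{ks}=-\sum_k T^k_{sk}=0$ by balancedness. The other piece of that group is $\sum_{k,s}T^k_{ks}\overline{T^i_{js}}$, which vanishes for the same reason. So the $-\frac34$ group drops out entirely. In the $+\frac14$ group, $\sum_{k,s}T^k_{is}\overline{T^k_{js}}=A_{i\bar j}$. The last piece is $\sum_{k,s}T^j_{ks}\overline{T^i_{ks}}$, and comparing with $B_{i\bar j}=\sum_{s,t}T^j_{st}\overline{T^i_{st}}$ shows it equals $B_{i\bar j}$ exactly.

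Adding everything up gives $\frac{c}{2}(n+1)\delta_{ij}-\frac12A+\frac14A+\frac14B=\frac{c}{2}(n+1)\delta_{ij}+\frac14(B-A)$, which is \eqref{eq:S}. The argument is pure index bookkeeping. The only point where I expect to slow down is keeping the index order and conjugation placement in $B$ straight, together with the signs from skew-symmetry when I invoke balancedness.
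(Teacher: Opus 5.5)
Your proposal is correct and is the same argument the paper indicates: set $k=\ell$ in \eqref{eq:Rb} and sum over $k$. Balancedness ($\sum_k T^k_{ks}=0$) kills the $-\frac34$ group, and the remaining torsion sums are $-\frac12A_{i\bar j}+\frac14A_{i\bar j}+\frac14B_{i\bar j}$, exactly as you computed.
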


In the following, we will abbreviate {\em holomorphic sectional curvature} as HSC. In the balanced constant-HSC setting, Lemma \ref{lemma2} and
\eqref{eq:S} show that $A$, $B$, and $S$ commute pairwise. Thus they
can be diagonalized simultaneously by a local unitary frame.

We use the following stability identity, which is formula (1) of \cite{WZ}.

\begin{lemma}[\cite{WZ}] \label{lemma-stab}
Let $(M^n,g)$ be a BTP manifold. Then under any local unitary frame $e$, it holds for any $1\leq i,j,p, q,\ell \leq n$ that
\begin{equation}  \label{eq:stab}
 (R^b_{i\bar{j}}\cdot T)^{\ell}_{pq} := \sum_{s=1}^n \big\{ R^b_{i\bar{j}s\bar{\ell}}T^s_{pq} - R^b_{i\bar{j}p\bar{s}}T^{\ell}_{sq} - R^b_{i\bar{j}q\bar{s}}T^{\ell}_{ps} \big\} =0.
 \end{equation}
\end{lemma}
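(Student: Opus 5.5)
The identity should be the Ricci identity for the parallel tensor $T$: a tensor that is parallel under a connection is annihilated by the curvature of that connection acting as a derivation. The proof is therefore essentially formal, and the work lies in matching index conventions.

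First, view the Chern torsion as a section of the bundle $E=\Lambda^2(T^{1,0}M)^*\otimes T^{1,0}M$, namely $T(e_p,e_q)=\sum_\ell T^\ell_{pq}e_\ell$. The Bismut connection $\nabla^b$ is metric and satisfies $\nabla^bJ=0$. Hence it preserves $T^{1,0}M$ and induces a connection on $E$, still written $\nabla^b$. Its curvature $R^b(X,Y)$ acts on $E$ as a derivation:
\[
(R^b(X,Y)\cdot T)(U,V)=R^b(X,Y)\big(T(U,V)\big)-T\big(R^b(X,Y)U,V\big)-T\big(U,R^b(X,Y)V\big).
\]
By Lemma~\ref{lemma1}, the BTP condition is equivalent to $\nabla^bT=0$ on $E$.

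Second, for any vector fields $X,Y$ we have
\[
R^b(X,Y)\cdot T=\nabla^b_X\nabla^b_YT-\nabla^b_Y\nabla^b_XT-\nabla^b_{[X,Y]}T.
\]
Each term on the right vanishes because $\nabla^bT\equiv0$ identically. This holds for curvature of any connection on any vector bundle, so no torsion correction appears: the term $\nabla^b_{[X,Y]}T$ already vanishes. Hence $R^b(X,Y)\cdot T=0$ for all $X,Y$.

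Third, take $X=e_i$ and $Y=\overline{e}_j$. Use the convention $R^b_{i\bar{j}k\bar{\ell}}=\langle R^b(e_i,\overline{e}_j)e_k,\overline{e}_\ell\rangle$, so that in a unitary frame $R^b(e_i,\overline{e}_j)e_k=\sum_s R^b_{i\bar{j}k\bar{s}}e_s$. Evaluate the derivation formula on $(e_p,e_q)$ and pair with $\overline{e}_\ell$:
\begin{itemize}
\item the first term $R^b(e_i,\overline{e}_j)\big(T(e_p,e_q)\big)$ gives $\sum_s T^s_{pq}R^b_{i\bar{j}s\bar{\ell}}$;
\item the second term gives $-\sum_s R^b_{i\bar{j}p\bar{s}}T^\ell_{sq}$;
\item the third term gives $-\sum_s R^b_{i\bar{j}q\bar{s}}T^\ell_{ps}$.
\end{itemize}
Their sum is exactly the left side of \eqref{eq:stab}, which therefore vanishes.

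The only delicate point is bookkeeping. One must confirm that the paper's ordering of indices in $R^b_{i\bar{j}k\bar{\ell}}$ matches the convention above, and that $R^b(e_i,\overline{e}_j)$ preserves type $(1,0)$. Type preservation holds because $\nabla^b$ is Hermitian, so the derivation really acts within $E$. No symmetry of $R^b$ from \eqref{eq:Rbsym} is needed.
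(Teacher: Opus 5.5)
Your proof is correct and takes essentially the paper's route. The paper gives no argument of its own; it cites \eqref{eq:stab} as formula (1) of \cite{WZ}, which is the Ricci identity $R^b(X,Y)\cdot T=0$ for the Chern torsion, parallel under $\nabla^b$ by Lemma~\ref{lemma1}. Your convention $R^b(e_i,\overline{e}_j)e_k=\sum_s R^b_{i\bar{j}k\bar{s}}e_s$ agrees with the paper's definition of the curvature derivations $D_{u\bar{v}}$, so your three terms match \eqref{eq:stab} exactly.
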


 In particular, if we take $i=j$ and sum it up from $1$ to $n$ in the above formula, and utilizing the symmetry property (\ref{eq:Rbsym}), then we get
\begin{equation}  \label{eq:stabS}
  \sum_{s} \big\{ S_{s\bar{\ell}}T^s_{pq} - S_{p\bar{s}}T^{\ell}_{sq} - S_{q\bar{s}}T^{\ell}_{ps} \big\} =0, \ \ \ \ \forall \ 1\leq p,q,\ell \leq n.
 \end{equation}
When $e$ makes $S$ diagonal this gives us the following, which is Lemma 6 of \cite{WZ}:

\begin{lemma}[\cite{WZ}] \label{lemmastabS2}
Let $(M^n,g)$ be a BTP manifold, and let $e$ be a local unitary frame under which $S$ is diagonal. Then we have
\begin{equation} \label{eq:stabS2}
(S_{i\bar{i}} + S_{k\bar{k}}-S_{j\bar{j}})\,T^j_{ik} = 0, \ \ \ \ \ \ \ \ \ \forall \ 1\leq i,j,k\leq n.
\end{equation}
\end{lemma}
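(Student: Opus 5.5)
This is a direct specialization of the contracted stability identity \eqref{eq:stabS}, which is a consequence of Lemma \ref{lemma-stab}. I would begin from \eqref{eq:stabS}, which holds under any local unitary frame $e$ for all $1\leq p,q,\ell\leq n$:
\[
\sum_{s} \big\{ S_{s\bar{\ell}}T^s_{pq} - S_{p\bar{s}}T^{\ell}_{sq} - S_{q\bar{s}}T^{\ell}_{ps} \big\} =0 .
\]
To obtain it, take $i=j$ in \eqref{eq:stab} and sum over $i$. In the first term, $\sum_i R^b_{i\bar{i}s\bar{\ell}}$ becomes $S_{s\bar{\ell}}$ by the symmetry $R^b_{i\bar{j}k\bar{\ell}}=R^b_{k\bar{\ell}i\bar{j}}$ from \eqref{eq:Rbsym}. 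The remaining two terms are handled the same way, giving $S_{p\bar{s}}$ and $S_{q\bar{s}}$.

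Next, choose $e$ so that $S$ is diagonal, i.e. $S_{a\bar{b}}=S_{a\bar{a}}\delta_{ab}$. Since $S$ is Hermitian symmetric, this is always possible pointwise, and in the balanced constant-HSC setting such a frame also exists locally because $S$ commutes with $A$ and $B$. Each sum over $s$ then collapses to a single term:
\[
S_{\ell\bar{\ell}}T^{\ell}_{pq} - S_{p\bar{p}}T^{\ell}_{pq} - S_{q\bar{q}}T^{\ell}_{pq}=0 .
\]
Renaming $p=i$, $q=k$, $\ell=j$ and multiplying by $-1$ gives $(S_{i\bar{i}}+S_{k\bar{k}}-S_{j\bar{j}})\,T^j_{ik}=0$, which is \eqref{eq:stabS2}.

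There is no real obstacle here. The only point needing care is the index bookkeeping in the contraction: one must check that the pair-swap symmetry of $R^b$ turns $\sum_i R^b_{i\bar{i}p\bar{s}}$ into $S_{p\bar{s}}$, where $S$ is defined by contracting the second pair, and not into some other trace. Since \eqref{eq:Rbsym} lets us exchange the two index pairs, this works.
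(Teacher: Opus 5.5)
Your proof is correct and follows the paper's argument exactly: set $i=j$ in \eqref{eq:stab} and sum, use the pair symmetry in \eqref{eq:Rbsym} to recognize each trace as a component of $S$, and then let the diagonal frame collapse each $s$-sum. Your aside about the existence of a diagonalizing frame is not needed, since the frame is part of the hypothesis and the identity is pointwise anyway.
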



\paragraph{\textbf{The pointwise Chern-torsion bracket.}}
Fix a point $x_0\in M$ and set $V:=T^{1,0}_{x_0}M$. Then
$V\cong\mathbb C^n$ is a complex vector space with Hermitian inner product
$(x,y):=\langle x,\overline y\rangle$. Since the Chern torsion $T$ is a
tensor, its value at $x_0$ defines a skew-symmetric complex-bilinear map
$T_{x_0}\colon\Lambda^2V\to V$. This construction uses only tangent vectors
at $x_0$ and does not involve extensions to local vector fields. Ni and Zheng
used this pointwise Lie-algebra interpretation of the Chern torsion in their
study of Chern Ambrose--Singer manifolds \cite{NiZ23}. Ni subsequently proved
that it applies to every Hermitian manifold with Bismut-parallel torsion and
used the notation $[\, ,\, ]_c$ \cite[Theorem 4.1]{Ni25}. Accordingly, define
\begin{equation}
[x,y]_c:=T_{x_0}(x,y),\qquad x,y\in V. \label{eq:Lie}
\end{equation}
Here the subscript $c$ stands for ``Chern'' and is unrelated to the constant
$c$ denoting the holomorphic sectional curvature. Equation
\eqref{eq:Jacobi} is precisely the Jacobi identity for $[\, ,\, ]_c$.
Consequently, $(V,[\, ,\, ]_c)$ is a complex Lie algebra, which we denote by
$\mathfrak g$. Moreover, $\nabla^bT=0$ implies that Bismut parallel transport
identifies these pointwise Hermitian Lie algebras by unitary Lie-algebra
isomorphisms. To distinguish this operation from the other brackets used in
the paper, we retain the subscript $c$ throughout. For subspaces
$U_0,Z_0\subseteq V$, set
\[
[U_0,Z_0]_c:=\operatorname{span}_{\mathbb C}
\{[u,z]_c:u\in U_0,\ z\in Z_0\}.
\]
Thus $[V,V]_c$ and $[W,W]_c$ are subspaces formed using the pointwise
Chern-torsion bracket, not the ordinary bracket of vector fields.

Following the argument in \cite{WZ}, let $r_B$ be the rank of $B$, let
$N=\ker(B)$, and put $W=N^\perp$. Then
$W=\operatorname{Im}T=[V,V]_c$; in particular, $\dim_{\C}W=r_B$.

\begin{lemma}[\cite{WZ}] \label{lemma-kernelB}
Let $(M^n,g)$ be a BTP manifold and $e$ a local unitary frame. Then we have
\begin{equation} \label{eq:kernelB}
 R^b_{i\bar{j}k\bar{\ell}} = 0,  \ \ \ \ \ \ \ \mbox{if} \ e_k\in W=(\mbox{ker}(B))^{\perp} \  \mbox{and} \ \,e_{\ell} \in N=\mbox{ker}(B),
\end{equation}
for any $1\leq i,j\leq n$.
\end{lemma}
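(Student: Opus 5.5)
Write $R^b(X,\bar Y)$ for the endomorphism of $V$ whose matrix in the frame $e$ has $(\ell,s)$ entry $R^b_{X\bar Y s\bar\ell}$, i.e.\ $R^b(X,\bar Y)e_s=\sum_\ell R^b_{X\bar Ys\bar\ell}\,e_\ell$. The claim is then that $R^b(X,\bar Y)$ annihilates $W$-directions into $N$: its $(\ell,k)$ entries vanish whenever $e_k\in W$ and $e_\ell\in N$. Since $N=W^\perp$, this says $R^b(X,\bar Y)$ preserves $W$. I would prove this using the stability identity \eqref{eq:stab} of Lemma \ref{lemma-stab}, which says each $R^b_{i\bar j}$ acts as a derivation of the bracket $[\,,\,]_c$.

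First I would rewrite \eqref{eq:stab} in invariant form. Let $D=R^b(e_i,\bar e_j)$, with the convention above, so that $D e_p=\sum_s R^b_{i\bar j p\bar s}e_s$. Then \eqref{eq:stab} reads
\[
D[e_p,e_q]_c=[De_p,e_q]_c+[e_p,De_q]_c .
\]
So $D$ is a derivation of the Lie algebra $\mathfrak g=(V,[\,,\,]_c)$. The only thing to check carefully here is the index placement: which slot of $R^b$ is contracted with the output index $\ell$ of $T^\ell_{pq}$, and which with the inputs $p,q$. I would verify that the first term $\sum_s R^b_{i\bar j s\bar\ell}T^s_{pq}$ is the $\ell$-th component of $D$ applied to $T(e_p,e_q)$, and that the other two terms are the $\ell$-th components of $T(De_p,e_q)$ and $T(e_p,De_q)$. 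The symmetry \eqref{eq:Rbsym} is available if this needs adjusting.

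Any derivation preserves the derived algebra $[\mathfrak g,\mathfrak g]_c$, because $D[x,y]_c=[Dx,y]_c+[x,Dy]_c\in[V,V]_c$. The paper has already recorded that $W=\operatorname{Im}T=[V,V]_c$, so $D(W)\subseteq W$.

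Now take $e_k\in W$ and $e_\ell\in N$. Then $De_k=\sum_s R^b_{i\bar jk\bar s}e_s$ lies in $W$. Its $\ell$-component is $\langle De_k,\bar e_\ell\rangle=R^b_{i\bar jk\bar\ell}$, and this vanishes because $e_\ell\perp W$. That is exactly \eqref{eq:kernelB}.

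The main obstacle I expect is the index/convention bookkeeping in the first step, namely confirming that the contraction in \eqref{eq:stab} really is the derivation action and not its conjugate-transpose. If it turned out to be the adjoint, I would switch to $R^b_{j\bar i}$, using the Hermitian symmetry of the Bismut curvature as a $(1,1)$-form-valued endomorphism, together with \eqref{eq:Rbsym}. This still gives invariance of $W$ and hence of $N=W^\perp$, and the same orthogonality argument concludes.
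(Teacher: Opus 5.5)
Your proof is correct and complete. With $De_p=\sum_s R^b_{i\bar jp\bar s}e_s$, the three terms of \eqref{eq:stab} are exactly the $e_\ell$-components of $D[e_p,e_q]_c$, $[De_p,e_q]_c$ and $[e_p,De_q]_c$. So the convention worry in your last paragraph does not arise, and no passage to the adjoint is needed. The identification $W=\operatorname{Im}T=[V,V]_c$, which you take from the paper, follows from $B(v,\bar v)=\sum_{s,t}|(T(e_s,e_t),v)|^2$.

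Your route differs from the one the paper points to. The paper does not reprove this lemma; it cites \cite{WZ}. Its accompanying remark is that formula (7) there also gives vanishing across every pair of distinct eigenspaces of $B$ (Lemma \ref{lemma-BRb}). This suggests the cited argument works directly with $B$. Since $B$ is $\nabla^b$-parallel, the curvature endomorphisms commute with $B$ (the fact quoted in the proof of Lemma \ref{lemma-DABS}), and so they preserve $\ker B$ and $(\ker B)^\perp$.

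Each approach buys something different. The $B$-commutation route never needs $W=\operatorname{Im}T$, and it gives the finer eigenspace statement at once. Your route uses only the stability identity and the principle that derivations preserve characteristic ideals. That is precisely the mechanism the paper later applies to $[W,W]_c$ in Proposition \ref{prop3} and to the radical in Proposition \ref{prop4}.

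The two viewpoints are not far apart. Since $D_{u\bar v}^*=D_{v\bar u}$ is also a derivation, the identities $[D,\operatorname{ad}_x]=\operatorname{ad}_{Dx}$ and $[D,\operatorname{ad}_x^*]=-(\operatorname{ad}_{D^*x})^*$ show that $D$ commutes with $B=\sum_i\operatorname{ad}_{e_i}\operatorname{ad}_{e_i}^*$. So your derivation argument also recovers Lemma \ref{lemma-BRb}.
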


Formula (7) in the proof of Lemma 4 of \cite{WZ} also gives the
corresponding statement for every pair of distinct eigenspaces of $B$.

 \begin{lemma} \label{lemma-BRb}
Let $(M^n,g)$ be a BTP manifold and  let  $e$   be  a local unitary frame under which $B$ is diagonal. Then whenever $B_{k\bar{k}} \neq B_{\ell \bar{\ell}}$ we have
\begin{equation*}  \label{eq:BRb}
R^b_{i\bar{j}k\bar{\ell}}=0, \ \ \ \ \ \forall \ 1\leq i,j\leq n.
\end{equation*}
\end{lemma}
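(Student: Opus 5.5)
The plan is to use only the fact, recorded just before Lemma \ref{lemma2}, that $\nabla^bB=0$ for a BTP metric, together with the Ricci identity for a parallel tensor. The key point is that a $\nabla^b$-parallel endomorphism commutes with the curvature operators of $\nabla^b$. With $B$ diagonal, this commutation forces exactly the vanishing claimed in the statement. No use of constant holomorphic sectional curvature or balancedness is needed, consistent with the hypotheses of the lemma.

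\emph{Step 1: commutation with curvature.} View $B$ as an endomorphism of $T^{1,0}M$, so $B e_k=\sum_s B_{k\bar{s}}e_s$. Since $\nabla^b$ is a Hermitian connection preserving the complex structure, its curvature $R^b(X,\overline{Y})$ acts on $T^{1,0}M$ as a skew-Hermitian endomorphism. Applying the Ricci identity $[\nabla^b_X,\nabla^b_{\overline Y}]-\nabla^b_{[X,\overline Y]}$ to the section $Be_k$, and using $\nabla^bB=0$, gives
\[
R^b(X,\overline{Y})(Be_k)=B\big(R^b(X,\overline{Y})e_k\big).
\]
So $[R^b(e_i,\overline{e}_j),B]=0$ for all $i,j$. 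I will work with the convention $R^b_{i\bar{j}k\bar{\ell}}=\langle R^b(e_i,\overline{e}_j)e_k,\overline{e}_\ell\rangle$. Pairing the commutation identity with $\overline{e}_\ell$ yields
\[
\sum_s \big( B_{k\bar{s}}\,R^b_{i\bar{j}s\bar{\ell}} - R^b_{i\bar{j}k\bar{s}}\,B_{s\bar{\ell}}\big)=0 .
\]

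\emph{Step 2: specialize to the diagonal frame.} In a frame with $B_{k\bar{s}}=B_{k\bar{k}}\delta_{ks}$, the identity collapses to
\[
(B_{k\bar{k}}-B_{\ell\bar{\ell}})\,R^b_{i\bar{j}k\bar{\ell}}=0 .
\]
Whenever $B_{k\bar{k}}\neq B_{\ell\bar{\ell}}$, this gives $R^b_{i\bar{j}k\bar{\ell}}=0$ for all $i,j$.

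As a consistency check, taking $\ell$ with $B_{\ell\bar{\ell}}=0$ and $k$ with $B_{k\bar{k}}\neq0$ recovers Lemma \ref{lemma-kernelB}. By the symmetry $R^b_{i\bar{j}k\bar{\ell}}=R^b_{k\bar{\ell}i\bar{j}}$ in \eqref{eq:Rbsym}, the same vanishing also holds in the first pair of indices.

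The only delicate point is bookkeeping: fixing the index convention for $R^b$ and the identification of the $(1,1)$-tensor $B_{i\bar{j}}$ with an endomorphism, so that the two terms in Step 1 carry the correct indices. Any transposition error there affects only which eigenvalues appear, not the conclusion, since the final identity is symmetric under swapping the roles up to sign. Everything else is a formal consequence of $\nabla^bB=0$.
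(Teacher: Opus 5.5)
Your argument is correct and is essentially the paper's: the paper defers to formula (7) of \cite{WZ}, and in the proof of Lemma \ref{lemma-DABS} it uses exactly your observation that $\nabla^bB=0$ forces every curvature endomorphism of $\nabla^b$ to commute with $B$, which in a $B$-diagonal frame reads $(B_{k\bar{k}}-B_{\ell\bar{\ell}})R^b_{i\bar{j}k\bar{\ell}}=0$. One harmless slip: for $X,Y$ of type $(1,0)$ the operator $R^b(X,\overline{Y})$ is not skew-Hermitian in general (its adjoint is $R^b(Y,\overline{X})$, as the paper notes for $D_{u\bar{v}}$, and skew-Hermitian-ness holds only for real arguments), but your proof never uses this.
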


 For any $u,v\in V$, let us denote by $D_{u\bar{v}}$ the endomorphism on $V$ defined by
$$D_{u\bar{v}}: V \rightarrow V, \ \ \ \ \ \ \ \ D_{u\bar{v}}(e_i) = \sum_{j=1}^n R^b_{u\bar{v}e_i\overline{e}_j} e_j,$$
where $e$ is any unitary frame. Clearly this definition is independent of the choice of $e$. Since $\overline{R^b_{v\bar{u}j\bar{i}} } = R^b_{u\bar{v}i\bar{j}} $, we know that the adjoint map $D_{u\bar{v}}^{\ast}$ (with respect to the Hermitian inner product on $V$) is equal to $D_{v\bar{u}}$. Note that under any unitary frame $e$, the matrix of $D_{u\bar{v}}$ is equal to $\Theta^b(u,\bar{v})$. By (\ref{eq:stab}), each $D_{u\bar{v}}$ is a derivation of the Lie algebra $V$, which will be called a {\bf  curvature derivation}. Its adjoint is also a derivation.

\begin{lemma} \label{lemma-DABS}
Let $(M^n,g)$ be a   balanced BTP manifold with constant Chern
holomorphic sectional curvature. For any $u,v\in V$, the curvature
derivation $D_{v\bar{u}}$ commutes with $A$, $B$, and $S$.
\end{lemma}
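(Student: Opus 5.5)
The plan is to get commutation with $A$ and $B$ from the fact that both tensors are Bismut parallel, and then to get commutation with $S$ from the formula \eqref{eq:S} of Lemma \ref{lemmaS}.

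\emph{Step 1: $A$ and $B$.} As recalled after Lemma \ref{lemma1}, the BTP condition gives $\nabla^bA=\nabla^bB=0$. View $A$ and $B$ as endomorphisms of $T^{1,0}M$. The Bismut connection preserves $J$, so it induces a connection on $\mathrm{End}(T^{1,0}M)$. Its curvature acts on an endomorphism $P$ by $P\mapsto [R^b(X,Y),P]$. The Ricci identity then says that for any $P$ with $\nabla^bP=0$,
\[
[R^b(X,Y),P]=\nabla^b_X\nabla^b_YP-\nabla^b_Y\nabla^b_XP-\nabla^b_{[X,Y]}P=0 .
\]
Take $X=u$ and $Y=\bar v$. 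In a unitary frame, the matrix of $R^b(u,\bar v)$ acting on $T^{1,0}M$ is $\Theta^b(u,\bar v)$, which is exactly the matrix of $D_{u\bar v}$. The one bookkeeping point is to check that the index convention $D_{u\bar v}(e_i)=\sum_j R^b_{u\bar v i\bar j}e_j$ matches the action of the curvature endomorphism, and not its transpose. Either way, commutation with a Hermitian $P$ is unaffected, because we may pass to adjoints and use $D_{u\bar v}^*=D_{v\bar u}$. Hence $[D_{u\bar v},A]=[D_{u\bar v},B]=0$ for all $u,v\in V$, and in particular for $D_{v\bar u}$.

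As an independent check for $B$, use Lemma \ref{lemma-BRb}. In a frame diagonalizing $B$, the entry $R^b_{u\bar v k\bar\ell}$ vanishes whenever $B_{k\bar k}\neq B_{\ell\bar\ell}$. So the matrix of $D_{u\bar v}$ is block diagonal with respect to the eigenspace decomposition of $B$. Since $B$ is scalar on each block, $D_{u\bar v}$ commutes with $B$.

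\emph{Step 2: $S$.} By Lemma \ref{lemmaS}, the balanced and constant-HSC hypotheses give
\[
S=\tfrac{c}{2}(n+1)\,\mathrm{Id}+\tfrac14(B-A)
\]
as endomorphisms. The identity commutes with everything, and Step 1 handles $A$ and $B$, so $D_{v\bar u}$ commutes with $S$. This is the only place where the balanced and constant-HSC hypotheses are used.

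\emph{Main obstacle.} The only delicate point is Step 1 for $A$: there is no eigenspace-vanishing lemma for $A$ analogous to Lemma \ref{lemma-BRb}, so the parallelism argument is genuinely needed there. If one wanted to avoid it, the fallback would be an algebraic argument. One would use that $D_{u\bar v}$ and $D_{u\bar v}^*=D_{v\bar u}$ are both derivations of $\mathfrak g$, and write $A=\sum_t \mathrm{ad}_{e_t}^*\mathrm{ad}_{e_t}$ up to conjugation conventions. One would then differentiate the identity $\sum_t \mathrm{ad}^*_{e_t}\mathrm{ad}_{e_t}=A$ along the one-parameter groups $\exp(s D)$. This works cleanly when $D$ is skew-Hermitian, but not in general, so I would rely on the parallel-tensor argument.
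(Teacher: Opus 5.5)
Your argument is correct and is essentially the paper's proof: $\nabla^bA=\nabla^bB=0$ forces every Bismut curvature endomorphism to commute with $A$ and $B$, and \eqref{eq:S} writes $S=\frac{c}{2}(n+1)I+\frac14(B-A)$, which is the only place balancedness and constant HSC enter. Your bookkeeping worry does not arise, because $D_{u\bar v}$ is exactly $R^b(u,\bar v)$ acting on $T^{1,0}M$ (its matrix in a unitary frame is $\Theta^b(u,\bar v)$); the adjoint hedge, the check via Lemma~\ref{lemma-BRb}, and the algebraic fallback for $A$ are therefore all unnecessary.
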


\begin{proof}
The BTP condition gives $\nabla^bA=\nabla^bB=0$, so every curvature
endomorphism of $\nabla^b$ commutes with $A$ and $B$. Balancedness enters
through the constant-HSC identity in Equation~\eqref{eq:S}, which gives
\[
 S=\frac{c}{2}(n+1)I+\frac14(B-A).
\]
Thus $S$ is a linear combination of $I$, $A$, and $B$, and every curvature
endomorphism commutes with $S$ as well.
\end{proof}




\vspace{0.3cm}

\section{The cases when $c\leq 0$}

In this section, we will prove Theorem \ref{thm2} in the cases $c<0$ and $c=0$. This will be achieved with the help of a square sum formula.

Fix a complex vector space $V\cong {\mathbb C}^n$ equipped with a
Hermitian inner product $(\cdot,\cdot)$; thus $(x,y)$ denotes the inner
product of $x,y\in V$. Denote by $\mbox{End}(V)$ the space of linear maps from $V$ into $V$. For $\phi \in \mbox{End}(V)$, its adjoint map $\phi^{\ast}$ is defined by
$ (\phi^{\ast}(x), y) = (x, \phi (y))$, $\forall \ x,y\in V$.
Let ${\mathcal C}$ be the space of all linear maps from $\Lambda^2V$ into $V$.
The Hermitian inner product $(\cdot,\cdot)$ on $V$ naturally induces a
Hermitian inner product on ${\mathcal C}$:
$$ (P, Q)_0 \ = \sum_{i,j,k=1}^n\!P^j_{ik} \overline{Q^j_{ik}}, \ \ \ \ \ \forall \ P, Q\in {\mathcal C}, $$
where $e$ is any unitary basis of $V$ and $P(e_i,e_k)=\sum_jP^j_{ik}e_j$ and $Q(e_i,e_k)=\sum_jQ^j_{ik}e_j$.  Clearly the quantity $(P, Q)_0$ is independent of the choice of the unitary basis $e$. We will also write $\parallel \! P\!\parallel^2_0 \ = (P,P)_0$.
For $\phi \in \mbox{End}(V)$, define the action $\pi (\phi)$ on ${\mathcal C}$ by
\begin{equation} \label{eq:action}
(\pi (\phi)P)(x,y) := \phi (P(x,y)) - P(\phi (x), y) - P(x, \phi (y)), \ \ \ \ \ \ \ \forall\ P \in {\mathcal C}, \, \forall \ x,y \in V.
\end{equation}

\begin{lemma}
The action defined by (\ref{eq:action}) satisfies the following
\begin{equation}  \label{eq:action3}
  (\pi(\phi))^{\ast} = \pi (\phi^{\ast}), \ \ \ \ \ [\pi(\phi), \pi(\psi)]= \pi([\phi, \psi]),  \ \ \ \ \ \pi (I)P=-P,
\end{equation}
for any $\phi$, $\psi \in \mbox{End}(V)$ and any $P \in {\mathcal C}$. Here $I$ is the identity map.
\end{lemma}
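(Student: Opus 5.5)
All three identities in \eqref{eq:action3} follow by direct computation from the definition \eqref{eq:action}, working in a fixed unitary basis $e$ of $V$. The most efficient route is to write $\pi(\phi)$ in components. With $\phi(e_i)=\sum_j\phi_{ij}e_j$, the component form is
\[
(\pi(\phi)P)^{\ell}_{pq}=\sum_s\big\{\phi_{s\ell}P^s_{pq}-\phi_{ps}P^{\ell}_{sq}-\phi_{qs}P^{\ell}_{ps}\big\}.
\]
Under this convention the matrix of $\phi^*$ is $\overline{\phi_{ji}}$. Alternatively, one can argue basis-free by viewing $\mathcal C\cong V\otimes\Lambda^2V^*$. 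Then $\pi(\phi)=\phi\otimes 1-1\otimes\Lambda^2\phi^{t}$ is the natural derivation action, and the bracket identity is just the statement that this is a Lie algebra representation. I will nevertheless check each identity concretely.

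\textbf{Identity $\pi(I)P=-P$.} This is immediate: $IP(x,y)-P(x,y)-P(x,y)=-P(x,y)$.

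\textbf{Adjoint identity.} I will compute $(\pi(\phi)P,Q)_0$ using a unitary basis, and treat each of the three terms separately.

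For the first term, $\sum(\phi P(e_i,e_k),Q(e_i,e_k))=\sum(P(e_i,e_k),\phi^*Q(e_i,e_k))$.

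For the second term, expand $\phi(e_i)=\sum_s\phi_{is}e_s$. Then
\[
\sum_{i,k}(P(\phi e_i,e_k),Q(e_i,e_k))=\sum_{i,s,k}\phi_{is}(P(e_s,e_k),Q(e_i,e_k)).
\]
Since $\overline{\phi^*_{si}}=\phi_{is}$, this equals $\sum_{s,k}(P(e_s,e_k),Q(\phi^*e_s,e_k))$.

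The third term is handled the same way in the second slot. Combining the three terms gives $(\pi(\phi)P,Q)_0=(P,\pi(\phi^*)Q)_0$. The only care needed here is the conjugate-linearity in the second argument of $(\cdot,\cdot)$.

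\textbf{Bracket identity.} I will expand $\pi(\phi)\pi(\psi)P(x,y)$. The result consists of:
\begin{itemize}
\item[] the term $\phi\psi P(x,y)$;
\item[] cross terms $-\phi P(\psi x,y)$, $-\phi P(x,\psi y)$, $-\psi P(\phi x,y)$, $-\psi P(x,\phi y)$;
\item[] the term $P(\psi\phi x,y)+P(x,\psi\phi y)$;
\item[] mixed terms $P(\psi x,\phi y)+P(\phi x,\psi y)$.
\end{itemize}
The order of composition in the $P(\psi\phi x,\cdot)$ terms arises because the inner operator acts first on the arguments. Subtracting the same expansion with $\phi$ and $\psi$ swapped, the cross terms and the mixed terms are symmetric in $(\phi,\psi)$ and cancel. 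What remains is
\[
[\phi,\psi]P(x,y)-P([\phi,\psi]x,y)-P(x,[\phi,\psi]y)=\pi([\phi,\psi])P(x,y).
\]

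The main (minor) obstacle is bookkeeping the order of composition in the argument slots, which must yield $[\phi,\psi]$ rather than $[\psi,\phi]$. I expect it to come out correctly because the argument slots carry a double minus sign.
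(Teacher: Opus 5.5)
Your proof is correct and takes essentially the same route as the paper: you verify the adjoint identity by direct computation in a unitary basis (slot by slot via $(\phi x,y)=(x,\phi^*y)$ and $\overline{\phi^*_{si}}=\phi_{is}$, where the paper works in full components), and you check the bracket and $\pi(I)$ identities directly from the definition, which the paper omits as immediate and your expansion handles correctly. One notational slip in your basis-free aside: the second factor must be the derivation extension of $\phi^t$ to $\Lambda^2V^*$, not the functorial map $\Lambda^2\phi^t$; the latter would give $\pi(I)=0$ instead of $-I$, though the aside plays no role in your actual argument.
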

\begin{proof}
The second and third equalities follow directly from the definition (\ref{eq:action}), so we omit their proofs. To verify the first equality, let us take a unitary basis $e$ of $V$, and write $\phi (e_i) = \sum_j E_i^{j}e_j$. Then we have $\phi^{\ast}(e_i) = \sum_j \overline{E^i_j} e_j$. For $P\in {\mathcal C}$, write $P(e_i,e_k)=\sum_jP^j_{ik}e_j$ with $P^j_{ik}=-P^j_{ki}$. Then by (\ref{eq:action}) we have
$$ \big(\pi (\phi)Q\big)_{ik}^j = \sum_s \big( Q^s_{ik}E^j_s - E^s_iQ^j_{sk} - E^s_k Q^j_{is} \big).  $$
Similarly,
$$ \big(\pi (\phi^{\ast})P\big)_{ik}^j = \sum_s \big( P^s_{ik}\overline{E^s_j} - \overline{E^i_s}P^j_{sk} - \overline{E^k_s} P^j_{is} \big).  $$
From this we derive
$$ \sum_{i,j,k} P^j_{ik} \overline{\big(\pi (\phi)Q\big)_{ik}^j} = \sum_{i,j,k} \big(\pi (\phi^{\ast})P\big)_{ik}^j \overline{Q^j_{ik}}, $$
that is, $(P, \pi(\phi)Q)_0 = (\pi(\phi^{\ast})P, Q)_0$ for any $P, Q\in {\mathcal C}$, hence $(\pi (\phi))^{\ast} P = \pi(\phi^{\ast})P$. This completes the proof of the lemma.
\end{proof}

Next let $T:\Lambda^2V\rightarrow V$ be the element in ${\mathcal C}$ given by the Chern torsion. Let  $e$ be a unitary basis of $V$, and denote by $L_i$ the element in $\mbox{End}(V)$ defined by $L_i(e_k)=\sum_j T^j_{ik}e_j$. Its adjoint map is $L_i^{\ast}(e_k) = \sum_j \overline{T^k_{ij}}e_j$. We have
\begin{equation}
\sum_i L_i^{\ast} L_i = A, \ \ \ \ \ \sum_i L_iL_i^{\ast} = B.
\end{equation}

\begin{proposition} \label{prop1}
Let $(M^n,g)$ be a balanced BTP manifold with constant Chern holomorphic sectional curvature $c$. Then under any local unitary frame $e$, the Chern torsion tensor $T$ satisfies the equality
\begin{equation}  \label{eq:square}
\sum_i \parallel\! \pi(L_i^{\ast})T\!\parallel^2_0 \ \, = \ 2(n+1)c \parallel\! T\!\parallel^2_0.
\end{equation}
\end{proposition}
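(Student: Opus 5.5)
The plan is to rewrite each summand as a quadratic form $(\pi(\cdot)T,T)_0$ and then combine two facts. First, $T$ is annihilated by $\pi(L_i)$, by the Jacobi identity. Second, $T$ is annihilated by $\pi(S)$, by the stability identity. The formula for $S$ in Lemma \ref{lemmaS} then converts $\pi(S)T=0$ into exactly the stated constant.

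\textbf{Step 1 (adjoint).} By the first relation in \eqref{eq:action3}, $(\pi(L_i^{\ast}))^{\ast}=\pi(L_i)$. Hence
\[
\parallel\! \pi(L_i^{\ast})T\!\parallel^2_0=(\pi(L_i)\pi(L_i^{\ast})T,\,T)_0 .
\]

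\textbf{Step 2 (each $L_i$ is a derivation).} $L_i$ is the operator $[e_i,\cdot\,]_c$ on the Lie algebra $\mathfrak g$. By \eqref{eq:Jacobi}, $L_i$ is a derivation, which in the notation \eqref{eq:action} says exactly $\pi(L_i)T=0$. I will check this componentwise:
\[
(\pi(L_i)T)^j_{pq}=\sum_s\big(T^s_{pq}T^j_{is}-T^s_{ip}T^j_{sq}-T^s_{iq}T^j_{ps}\big).
\]
This vanishes by \eqref{eq:Jacobi} after using skew-symmetry. Consequently
\[
\pi(L_i)\pi(L_i^{\ast})T=[\pi(L_i),\pi(L_i^{\ast})]T=\pi([L_i,L_i^{\ast}])T,
\]
by the second relation in \eqref{eq:action3}. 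Summing over $i$ and using $\sum_iL_iL_i^{\ast}=B$ and $\sum_iL_i^{\ast}L_i=A$ gives
\[
\sum_i \parallel\! \pi(L_i^{\ast})T\!\parallel^2_0=(\pi(B-A)T,\,T)_0 .
\]

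\textbf{Step 3 (stability and the formula for $S$).} The summed stability identity \eqref{eq:stabS} says $\pi(S)T=0$, where $S$ is regarded as an endomorphism with $S(e_i)=\sum_j S_{i\bar j}e_j$. By Lemma \ref{lemmaS}, $S=\frac{c}{2}(n+1)I+\frac14(B-A)$. Using $\pi(I)T=-T$ from \eqref{eq:action3}, we get
\[
0=\pi(S)T=-\tfrac{c}{2}(n+1)T+\tfrac14\pi(B-A)T,
\]
so $\pi(B-A)T=2(n+1)c\,T$. Substituting into Step 2 gives \eqref{eq:square}.

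The only real obstacle is bookkeeping. One must verify that the index conventions of \eqref{eq:stabS}, together with the endomorphism convention for $S$ (and likewise for $A$ and $B$, via the $L_i$), really produce $\pi(S)T$ and not $\pi(S^{T})T$ or a sign-flipped version. The same check is needed for $\pi(L_i)T=0$. Since $A$, $B$ and $S$ are Hermitian, transposition issues reduce to conjugation, and I would confirm each claim in a frame that diagonalizes $S$. In such a frame, \eqref{eq:stabS2} reads $(S_{i\bar i}+S_{k\bar k}-S_{j\bar j})T^j_{ik}=0$, which is precisely $(\pi(S)T)^j_{ik}=0$ for diagonal $S$.
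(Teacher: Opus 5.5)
Your proposal is correct and follows the paper's proof essentially step for step: $\pi(L_i)T=0$ from the Jacobi identity, then the adjoint and commutator relations reduce the sum to $(\pi(B-A)T,T)_0$, and finally $\pi(S)T=0$ combined with $4S=2(n+1)cI+B-A$ and $\pi(I)T=-T$ gives the constant. The index-convention worry you raise does not arise: with $S(e_i)=\sum_j S_{i\bar j}e_j$, equation \eqref{eq:stabS} is literally $(\pi(S)T)^{\ell}_{pq}=0$ in any unitary frame, so you do not need to pass to a frame that diagonalizes $S$.
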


\begin{proof}
Note that by the definition (\ref{eq:action}) and the property (\ref{eq:Jacobi}), we always have $\pi(L_i)T=0$. The equation (\ref{eq:stabS}) says that $\pi (S)T =0$. Since $4S=2(n+1)c+B-A$ by (\ref{eq:S}), we conclude that $\pi(B-A)T = -2(n+1)c \,\pi(I)T = 2(n+1)cT$, where the last equality comes from the third identity in (\ref{eq:action3}). Therefore
\begin{eqnarray*}
\sum_i \parallel\! \pi(L_i^{\ast})T\!\parallel^2_0  & = & \sum_i \big( \pi(L_i^{\ast})T, \pi(L_i^{\ast})T\big)_0 \ = \ \sum_i \big( \pi(L_i)\pi(L_i^{\ast})T, T\big)_0 \\
& = & \sum_i \big( [\pi(L_i),\pi(L_i^{\ast})]T, T\big)_0 \ = \ \sum_i \big( \pi([L_i,L_i^{\ast}])T, T\big)_0 \\
& = &  \big( \pi ( \sum_i [L_i, L_i^{\ast}])T, T\big)_0 \ = \ \big( \pi(B-A)T, T\big)_0 \\
& = & \big( 2(n+1)cT, T\big)_0 \ = \ 2(n+1)c \parallel\!T\!\parallel^2_0.
\end{eqnarray*}
This completes the proof of the proposition.
\end{proof}

\begin{proof}[{\bf Proof of Theorem \ref{thm2} when $c\leq 0$.}]
Let $(M^n,g)$ be a balanced BTP manifold with constant Chern holomorphic sectional curvature $c$. Assume that $c\leq 0$. If $c<0$, then by (\ref{eq:square}) in Proposition \ref{prop1}, we get $\parallel\!T\!\parallel^2_0\ =0$, hence $T=0$ which means that the metric $g$ is K\"ahler. If $c=0$, then (\ref{eq:square}) implies that $\pi(L_i^{\ast})T=0$ for each $i$. For any $1\leq i,j,k,\ell \leq n$, we have by definition that
$$ \big(\pi(L_j^{\ast})T\big)^{\ell}_{ik} = \sum_s \big\{  \big(L_j^{\ast}\big)^{\ell}_sT^s_{ik} -  \big(L_j^{\ast}\big)^{s}_i T^{\ell}_{sk}- \big(L_j^{\ast}\big)^{s}_k T^{\ell}_{is} \big\} = \sum_s \big\{ T^s_{ik} \overline{T^s_{j\ell } } + T^{\ell}_{ks} \overline{T^i_{js } } -  T^{\ell}_{is} \overline{T^k_{js } }\big\}.$$
On the other hand, by (\ref{eq:Rb-R}) and (\ref{eq:Rb-Rb}), we get
\begin{eqnarray*}
R_{i\bar{j}k\bar{\ell}} - R_{k\bar{j}i\bar{\ell}} & = & -(R^b_{i\bar{j}k\bar{\ell}} - R_{i\bar{j}k\bar{\ell}}) + (R^b_{k\bar{j}i\bar{\ell}} - R_{k\bar{j}i\bar{\ell}}) + (R^b_{i\bar{j}k\bar{\ell}} - R^b_{k\bar{j}i\bar{\ell}} ) \\
& = & \sum_s \big\{ T^s_{ik} \overline{T^s_{j\ell } } + T^{\ell}_{ks} \overline{T^i_{js } } -  T^{\ell}_{is} \overline{T^k_{js } }\big\} \ \ = \ \ \big(\pi(L_j^{\ast})T\big)^{\ell}_{ik} \ \ = \ \ 0.
\end{eqnarray*}
Hence the metric $g$ is Chern K\"ahler-like in the sense that $R$ obeys all K\"ahler symmetries (\cite{YangZ}). In this case, by Tang's work \cite{Tang} we know that $g$ must be Chern flat since it has vanishing Chern holomorphic sectional curvature. This completes the proof of Theorem \ref{thm2} in the cases when $c<0$ and $c=0$.
\end{proof}

For compact BTP manifolds that are Chern flat, Podest\`a and Zheng showed in \cite{PodestaZ} that such manifolds are exactly the compact quotients of reductive complex Lie groups. To be more precise, if $(M^n,g)$ is a compact Chern flat BTP manifold (note that compact Chern flat manifolds are always balanced and are always quotients of complex Lie groups), then its universal cover is $(G,\tilde{g})$ where $G={\mathbb C}^k \times G_1 \times \cdots \times G_r$ with each $G_i$ being a simple complex Lie group, $k,r\geq 0$, and $\tilde{g}$ is the product metric $\tilde{g}=g_0\times g_1 \times \cdots \times g_r$, with $g_0$ the flat metric and $g_i$ a left-invariant Hermitian metric on $G_i$ for each $i$. Conversely, they also showed that any reductive complex Lie group admits left-invariant Hermitian metrics that are BTP and Chern flat, and on each simple factor, such metrics are unique up to Killing isometries. We refer the reader to \cite{PodestaZ} for more details.

\vspace{0.3cm}

\section{The case when $c>0$ and $r_B<n$}

The only remaining case in the proof of Theorem \ref{thm2} is $c>0$. Since the square-sum formula (\ref{eq:square}) does not yield an immediate conclusion, we divide the discussion into two parts: this section treats $r_B<n$, and the next treats $r_B=n$.

Throughout this section, we assume that $(M^n,g)$ is a balanced BTP manifold with constant Chern holomorphic sectional curvature $c>0$ and $B$-rank $r_B<n$. If $r_B=0$, then $B=0$, so $T=0$ and $g$ is K\"ahler. Hence we may assume that $0<r_B<n$.

Recall that we have the orthogonal decomposition $V=W\oplus N$ where $V$ is the holomorphic tangent space of $M^n$ at any given point, $N=\mbox{ker}(B)$ is the kernel space of $B$, and $W=N^{\perp}$ is the image space of $B$, which is also the image space of the skew linear map $T:\Lambda^2V \rightarrow V$. Let $S$ be the first Bismut Ricci curvature tensor. As mentioned before, we will use the same letter to denote a $(1,1)$ tensor on $M^n$ and the corresponding endomorphism on the holomorphic tangent bundle of $M^n$. Since $[S,B]=0$, $S$ will preserve the splitting $V=W\oplus N$, namely, $S(W)\subset W$ and $S(N)\subset N$. Write $S_W=S|_W$ and $S_N=S|_N$, then we have
$S=S_W \oplus S_N$.

By Lemma \ref{lemma2}  and \eqref{eq:S}, $A$, $B$, and  $S$ are
mutually commutative, so there exist local unitary frames $e$ such that
  \begin{equation} \label{eq:e}
\left\{ \begin{aligned} A, B, S \ \mbox{are all diagonal under} \ e; \hspace{1.35cm}\\
W = \mbox{span}\{ e_1, \ldots , e_{r_B}\}; \hspace{2.65cm}\\
S_W=\mbox{diag}(\lambda_1,\ldots , \lambda_{r_B}), \ \ \lambda_1\leq \cdots \leq \lambda_{r_B}. \end{aligned}
\right.
\end{equation}
Note that here we rearranged the order of $e_i$ so that $\lambda_1$ is the smallest eigenvalue of $S_W$ and $\lambda_{r_B}$ is the largest eigenvalue of $S_W$. Denote by $P=\{ 1, 2, \ldots , r_B\}$ and $Q=\{ r_B\!+\!1, \ldots , n\}$. We will use $i,j\in P$ and $\alpha , \beta \in Q$ for the index range. Introduce $A'$ and $A''$ on $N$ by letting
\begin{equation} \label{eq:A'A''}
 A'_{\alpha \bar{\beta}} = \sum_{i,k\in P}T^i_{\alpha k} \overline{ T^i_{\beta k}} , \ \ \ \ A''_{\alpha \bar{\beta}} = \sum_{i\in P, \gamma \in Q}T^i_{\alpha \gamma} \overline{ T^i_{\beta \gamma}} , \ \ \ \ \ \ \ \ \forall \ \alpha , \beta \in Q.
 \end{equation}
Since $N$ is the kernel of $B$, we have $T^{\alpha}_{\ast \ast}=0$ for any $\alpha \in Q$. So by definition $A_N=A'+A''$. By (\ref{eq:kernelB}) in Lemma \ref{lemma-kernelB}, we have $R^b_{p\bar{q}k\bar{\beta}}=0$ for any $k\in P$,  $\beta \in Q$, and any $p$, $q$. In particular, $R^b_{\alpha\bar{k}k\bar{\beta}}=0$ for any $k\in P$ and any  $\alpha ,\beta \in Q$. On the other hand, by (\ref{eq:Rb}) we have
\begin{equation} \label{eq:cross}
 R^b_{\alpha\bar{k}k\bar{\beta}} = \frac{c}{2}\delta_{\alpha \beta} +\frac{1}{2} \sum_{j\in P} T^j_{\alpha k} \overline{ T^j_{\beta k} } -\frac{3}{4} \sum_{j\in P} T^k_{\alpha j} \overline{ T^k_{\beta j} }  -\frac{3}{4} \sum_{\gamma \in Q} T^k_{\alpha \gamma } \overline{ T^k_{\beta \gamma } }.
\end{equation}
By summing up $k\in P$ in (\ref{eq:cross}) we get
\begin{equation}  \label{eq:A'+A''}
A' + 3A'' = 2cr_B I_N.
\end{equation}

\begin{lemma} \label{lemma-min}
Let $(M^n,g)$ be a balanced BTP manifold with constant Chern holomorphic sectional curvature $c>0$ and with $B$-rank $0<r_B<n$. Then we have
\begin{equation}  \label{eq:SN}
S_N = \frac{c}{2}(m+1)I_N + \frac{1}{2}A'',
\end{equation}
where $m=n-r_B$, and $A'$ and $A''$ are defined by (\ref{eq:A'A''}). In particular, $S_N $ is positive definite.
\end{lemma}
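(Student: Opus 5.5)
The plan is to get \eqref{eq:SN} by pure linear algebra on the block $N$, combining Lemma \ref{lemmaS} with the identity \eqref{eq:A'+A''}. No new curvature computation should be needed.

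First, I restrict \eqref{eq:S} to $N$. By Lemma \ref{lemma2} and \eqref{eq:S}, the endomorphisms $A$, $B$, $S$ commute pairwise, so each of them preserves $N=\ker(B)$ and $W$. Hence we may take $\alpha,\beta\in Q$ in \eqref{eq:S}. Since $B$ vanishes on $N$, this gives
\[
S_N=\frac{c}{2}(n+1)I_N-\frac14 A_N ,
\]
where $A_N$ is the block $(A_{\alpha\bar\beta})_{\alpha,\beta\in Q}$.

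Second, I identify $A_N=A'+A''$, as already noted before the lemma. Write $A_{\alpha\bar\beta}=\sum_{s,t}T^s_{\alpha t}\overline{T^s_{\beta t}}$. The upper index $s$ ranges only over $P$, because $\operatorname{Im}T=W$ forces $T^\gamma_{\ast\ast}=0$ for $\gamma\in Q$. Splitting the lower summation index $t$ into $t\in P$ and $t\in Q$ then gives exactly $A'$ and $A''$ from \eqref{eq:A'A''}.

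Third, I use \eqref{eq:A'+A''}, which came from summing \eqref{eq:cross} over $k\in P$ and applying Lemma \ref{lemma-kernelB}. It gives $A'=2cr_BI_N-3A''$, so $A_N=2cr_BI_N-2A''$. Substituting this into the first display yields
\[
S_N=\frac{c}{2}(n+1-r_B)I_N+\frac12A''=\frac{c}{2}(m+1)I_N+\frac12A''.
\]

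Finally, $A''$ is a Gram-type sum $\sum_{i,\gamma}T^i_{\alpha\gamma}\overline{T^i_{\beta\gamma}}$, so it is positive semidefinite. Since $c>0$ and $m\ge1$, $S_N$ is then positive definite. The only step requiring care is the second one, namely checking that the index bookkeeping in $A_N$ uses $T^\gamma_{\ast\ast}=0$ correctly. Beyond that, I do not expect a genuine obstacle.
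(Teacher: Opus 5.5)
Your proposal is correct and follows the paper's proof step for step. You restrict \eqref{eq:S} to $N$ using $B_N=0$, write $A_N=A'+A''$, substitute \eqref{eq:A'+A''}, and read off positive definiteness from $A''\geq 0$ and $c>0$.
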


\begin{proof}
Since $B_N=0$, by (\ref{eq:S}) and (\ref{eq:A'+A''}) we have
\begin{eqnarray*}
 S_N &= & \frac{c}{2}(n+1)I_N -\frac{1}{4}A_N \ \ = \ \ \frac{c}{2}(n+1)I_N  -\frac{1}{4}(A'+A'') \\
 & = & \frac{c}{2}(n+1)I_N  -\frac{1}{4}(A'+3A'') + \frac{1}{2} A'' \ \ = \ \ \frac{c}{2}(n+1)I_N  -\frac{1}{4}2cr_BI_N + \frac{1}{2} A'' \\
 & = & \frac{c}{2}(n+1-r_B)I_N  + \frac{1}{2} A'' \ \ = \ \ \frac{c}{2}(m+1)I_N  + \frac{1}{2} A''.
 \end{eqnarray*}
 This completes the proof of the lemma.
\end{proof}

Since $A''\geq 0$, its trace $\operatorname{tr}(A'')\geq 0$.
  Indeed,
\begin{equation}\label{eq:traceAB}
\operatorname{tr}A
=\sum_{i,s,t}|T^s_{it}|^2
=\sum_{i,s,t}|T^i_{st}|^2
=\operatorname{tr}B.
\end{equation}
The middle equality only relabels the dummy indices $i$ and $s$.
Thus \eqref{eq:S} yields
$\operatorname{tr}S=\frac{c}{2}n(n+1)$. Taking traces in
\eqref{eq:SN},  we get
\begin{equation} \label{eq:traceSN}
\mbox{tr}(S_N) = \frac{c}{2}m(m+1) + \frac{\operatorname{tr}(A'')}{2}, \ \ \  \mbox{tr}(S_W) = \mbox{tr}(S) -\mbox{tr}(S_N)=\frac{c}{2}r_B(n+m+1) - \frac{\operatorname{tr}(A'')}{2} .
\end{equation}

Next let us estimate the lower bound of $S_W$.
\begin{lemma} \label{lemma-SWmin}
Let $(M^n,g)$ be a balanced BTP manifold with  constant  Chern
holomorphic sectional curvature  $c>0$ and with $B$-rank
$0<r_B<n$.   Set $m=n-r_B=\dim_{\C}N$. Then  $r_B\geq 2$, the smallest
eigenvalue of $S_W$ is simple, and
\begin{equation} \label{eq:SWmin}
 \lambda_{\min}(S_W)
 = (m+1)c+\frac{\operatorname{tr}(A'')}{m},
 \qquad
 \frac{2mc}{3}\leq\operatorname{tr}(A'')
 <\frac{r_B(r_B-1)mc}{m+2r_B}.
\end{equation}
 \end{lemma}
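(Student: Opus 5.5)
The plan is to combine the stability identity \eqref{eq:stabS2} (in a frame \eqref{eq:e}) with the positivity of $S_N$ from Lemma \ref{lemma-min} to locate the smallest eigenvalue $\lambda_1$ of $S_W$. The three inequalities in \eqref{eq:SWmin}, and $r_B\geq 2$, should then follow from trace bookkeeping via \eqref{eq:traceSN}.

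\emph{Step 1: where $e_1$ comes from.} Since $W=[V,V]_c$ and $e_1\in W$, some component $T^1_{pq}$ is nonzero. By \eqref{eq:stabS2}, $T^j_{ik}\neq 0$ forces $S_{j\bar j}=S_{i\bar i}+S_{k\bar k}$. If $p=\alpha\in Q$ and $q=k\in P$, this gives $\lambda_1=S_{\alpha\bar\alpha}+\lambda_k>\lambda_k\geq\lambda_1$, using $S_N>0$; this is impossible.

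So $e_1$ can only be produced from $N\times N$ or from $W\times W$. The $W\times W$ case, $\lambda_1=\lambda_p+\lambda_q$ with $p,q\in P$, requires some $\lambda_p\leq 0$. I would rule it out as follows. Choose $j$ with $\lambda_j$ minimal among the eigenvalues reached from $W\times W$, iterate the same bracket argument, and use that $\mathfrak g$ restricted to the part of $W$ with nonpositive $S$-eigenvalue would be nilpotent. This should contradict the curvature identities \eqref{eq:Rb} together with Lemma \ref{lemma-kernelB}.

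Conclusion of this step: every nonzero $T^1_{\ast\ast}$ is of the form $T^1_{\alpha\beta}$ with $\alpha,\beta\in Q$, and there
$$\lambda_1=S_{\alpha\bar\alpha}+S_{\beta\bar\beta}=(m+1)c+\tfrac12\bigl(A''_{\alpha\bar\alpha}+A''_{\beta\bar\beta}\bigr).$$

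\emph{Step 2: identify $A''$ and prove simplicity.} This is the main obstacle. I would evaluate \eqref{eq:Rb} on the components $R^b_{\alpha\bar\beta\gamma\bar\delta}$ with all indices in $Q$. Using $T^{\alpha}_{\ast\ast}=0$ and Lemma \ref{lemma-BRb}, this should show that the $N\times N$ part of the bracket lands in a single direction of $W$. That direction is then the $\lambda_1$-eigenline, which gives simplicity.

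The same computation should force $A''$ to be scalar on $N$, namely $A''=\frac{\operatorname{tr}(A'')}{m}I_N$. Substituting into Step 1 then gives $\lambda_{\min}(S_W)=(m+1)c+\operatorname{tr}(A'')/m$. If $A''$ is not immediately scalar, I would first try the derivation property of $D_{u\bar v}$ (Lemma \ref{lemma-DABS}) to average over $N$.

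For the lower bound $\operatorname{tr}(A'')\geq \frac{2mc}{3}$, I expect to use the same $Q$-index curvature identity, traced over $Q$. Positivity of the remaining term should then bound $\operatorname{tr}(A'')$ from below against $2cr_B I_N=A'+3A''$.

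\emph{Step 3: upper bound and $r_B\geq2$.} Write $t=\operatorname{tr}(A'')$. From \eqref{eq:traceSN} we have $\operatorname{tr}(S_W)=\frac c2 r_B(n+m+1)-\frac t2$. Since $\lambda_1$ is simple and is the minimum, once $r_B\geq 2$ we get the strict inequality $\operatorname{tr}(S_W)>r_B\lambda_1$.

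Substituting $\lambda_1=(m+1)c+t/m$ and $n=r_B+m$ simplifies this to
$$\tfrac12 r_B(r_B-1)c>\frac{t\,(m+2r_B)}{2m},$$
which is the upper bound in \eqref{eq:SWmin}.

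Finally, if $r_B=1$ the same computation with equality gives $t\leq 0$. This contradicts $t\geq \frac{2mc}{3}>0$, so $r_B\geq 2$.
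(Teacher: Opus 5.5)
\textbf{The gap is in Step 2, and Step 2 carries the whole lemma.} The all-$Q$ components you propose to use give no equation. Lemma \ref{lemma-BRb} is vacuous there because $B|_N=0$. Formula \eqref{eq:Rb} merely evaluates them as $R^b_{\alpha\bar\beta\gamma\bar\delta}=\frac{c}{2}(\delta_{\alpha\beta}\delta_{\gamma\delta}+\delta_{\alpha\delta}\delta_{\gamma\beta})-\frac12\sum_{s\in P}T^s_{\alpha\gamma}\overline{T^s_{\beta\delta}}$. So nothing forces $[N,N]_c$ to be one-dimensional, and nothing forces $A''$ to be scalar. Scalarity of $A''$ is also more than you need: the correct argument only shows that the spectrum of $S_N$ is symmetric about $\lambda_1/2$, and that already suffices.

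The missing idea is the \emph{mixed} vanishing $R^b_{\alpha\bar k k\bar\beta}=0$ of Lemma \ref{lemma-kernelB}, i.e.\ \eqref{eq:cross}, taken at the single slice $k=1$. Your Step 1 fact $T^1_{\alpha j}=0$ ($\alpha\in Q$, $j\in P$) kills the term $-\frac34\sum_{j\in P}T^1_{\alpha j}\overline{T^1_{\beta j}}$. What remains is $\frac34EE^*=\frac c2I_m+\frac12UU^*\geq\frac c2I_m$, where $E=(T^1_{\alpha\beta})_{\alpha,\beta\in Q}$ and $U=(T^j_{\alpha1})_{\alpha\in Q,\,2\leq j\leq r_B}$. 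Hence $E$ is invertible, and everything follows from that.
\begin{itemize}
\item \emph{Formula.} Equation \eqref{eq:stabS2} reads $S_NE+ES_N=\lambda_1E$, so $\lambda_1I_m=S_N+ES_NE^{-1}$. Taking traces gives $2\operatorname{tr}S_N=m\lambda_1$, which with \eqref{eq:traceSN} is the formula for $\lambda_{\min}(S_W)$.
\item \emph{Lower bound.} The trace of the $EE^*$ identity gives $\operatorname{tr}A''\geq\sum_{\alpha,\beta}|T^1_{\alpha\beta}|^2\geq\frac{2mc}{3}$. Your proposed route via $A'+3A''=2cr_BI_N$ can only give an upper bound on $\operatorname{tr}A''$.
\item \emph{Simplicity.} If $\lambda_2=\lambda_1$, then $E'=(T^2_{\alpha\beta})$ is invertible by the same argument. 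Rotating the frame within $\operatorname{span}\{e_1,e_2\}$ then makes every $E+tE'$ ($t\in\C$) invertible. But $\det(E+tE')$ has degree $m$ with leading coefficient $\det E'\neq0$, so it has a root.
\end{itemize}
The second half of your Step 1, excluding $T^1_{pq}\neq0$ for $p,q\in P$, is unsupported as sketched and also unnecessary. Nonzero components $T^1_{\alpha\beta}$ come from the invertibility of $E$, not from $e_1\in[V,V]_c$.

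\textbf{Step 3 is correct.} Your upper-bound computation is the paper's. Your derivation of $r_B\geq2$ is self-contained, whereas the paper cites Proposition 1 of \cite{WZ}. Invertibility of $E$, the formula for $\lambda_1$, and the lower bound on $\operatorname{tr}A''$ do not use $r_B\geq2$; for $r_B=1$ the matrix $U$ is empty. In that case $\operatorname{tr}S_W=\lambda_1$ forces $\operatorname{tr}A''=0$, contradicting $\operatorname{tr}A''\geq\frac{2mc}{3}>0$. This too only works once Step 2 is replaced by the argument above.
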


\begin{proof}
Choose a unitary frame $e$ satisfying  \eqref{eq:e}, and write
$\lambda_{\min}(S_W)=\lambda_1$. For $\alpha\in Q$ and  $j\in P$,
\eqref{eq:stabS2} with output index $1$ gives
\[
 (\lambda_\alpha+\lambda_j-\lambda_1)T^1_{\alpha j}=0.
\]
By \eqref{eq:SN}, $\lambda_\alpha>0$, while
$\lambda_j\geq\lambda_1$. Hence
\begin{equation}\label{eq:lemma12-min-output}
 T^1_{\alpha j}=0
 \qquad(\alpha\in Q,\ j\in P).
\end{equation}
Applying \eqref{eq:cross}  with $k=1$   therefore gives, for
$\alpha,\beta\in Q$,
\[
 0=\frac{c}{2}\delta_{\alpha\beta}
   +\frac{1}{2}\sum_{j\in P}
      T^j_{\alpha1} \overline{T^j_{\beta1}}
   -\frac{3}{4}\sum_{\gamma\in Q}
      T^1_{\alpha\gamma} \overline{T^1_{\beta\gamma}}.
\]
  Set
\[
  E=(T^1_{\alpha\beta})_{\alpha,\beta\in Q}\in M_m(\C),
 \qquad
 U=(T^j_{\alpha1})_{ \substack{\alpha\in Q\\2\leq j\leq r_B} }
       \in M_{m\times(r_B-1)}(\C),
\]
  where $U$ is the zero-column matrix when $r_B=1$. Then
\[
 \frac{3}{4}EE^\ast
 =\frac{c}{2}I_m+\frac{1}{2}UU^\ast
 \geq\frac{c}{2}I_m>0.
\]
Thus  the skew-symmetric matrix $E$ is  invertible.

For $\alpha,\beta\in Q$, Equation \eqref{eq:stabS2} also gives
\[
 (\lambda_\alpha+\lambda_\beta-\lambda_1)
 T^1_{\alpha\beta}=0.
\]
Since $S_N$ is diagonal in the chosen frame, this is
\[
S_NE+ES_N=\lambda_1E.
\]
Multiplication on the right by $E^{-1}$ and taking traces yield
\[
2\operatorname{tr}(S_N)=m\lambda_1.
\]
Using \eqref{eq:traceSN}, we obtain
\[
\lambda_1
 =\frac{2}{m}\operatorname{tr}(S_N)
 =(m+1)c+\frac{\operatorname{tr}(A'')}{m}.
\]

By Proposition 1 in \cite{WZ}, the case $r_B=1$ cannot occur. Hence
$r_B\geq2$.

It remains to prove that $\lambda_1$ is simple. Suppose instead that
 $\lambda_2=\lambda_1$.  Set
\[
E'=(T^2_{\alpha\beta})_{\alpha,\beta\in Q}\in M_m(\C),
 \qquad
 U'=(T^j_{\alpha2})_{\substack{\alpha\in Q\\j\in P\setminus\{2\}}}
       \in M_{m\times(r_B-1)}(\C).
\]
Because $\lambda_2=\lambda_1$,  the   same stability argument with output
index $2$ gives $T^2_{\alpha j}=0$ for every $\alpha\in Q$ and
$j\in P$. Equation \eqref{eq:cross} with $k=2$ then yields
\[
\frac{3}{4}E'E'^\ast
 =\frac{c}{2}I_m+\frac{1}{2}U'U'^\ast
 \geq\frac{c}{2}I_m>0.
\]
Hence $E'$ is also invertible.

Thus every unit vector in $\operatorname{span}\{e_1,e_2\}$ would
produce an invertible torsion matrix. We now show that the resulting matrix
pencil must contain a singular member.

For $t\in\C$, put $\rho=(1+|t|^2)^{1/2}$ and define
\[
\widetilde e_1=\frac{e_1+\overline t\,e_2}{\rho},
 \qquad
 \widetilde e_2=\frac{-t\,e_1+e_2}{\rho},
 \qquad
 \widetilde e_p=e_p\quad(3\leq p\leq n).
\]
This frame is unitary, preserves $W\oplus N$, and still diagonalizes
$S$ because $\lambda_1=\lambda_2$. If $\widetilde T$ denotes the
torsion components in this frame, then the output transformation gives
\[
\widetilde E(t)
 :=(\widetilde T^1_{\alpha\beta})_{\alpha,\beta\in Q}
 =\frac{E+tE'}{\rho}.
\]
Repeating the preceding positive-definite calculation with
$\widetilde e_1$ shows that $\widetilde E(t)$ is invertible for every
$t\in\C$. However,
\[
p(t)=\det(E+tE')
\]
has degree exactly $m$, since its leading coefficient is
$\det(E')\neq0$. As  $m>0$,  the polynomial $p$ has a complex zero,
contradicting the invertibility of $\widetilde E(t)$. Therefore
$\lambda_1<\lambda_2$.

Since $r_B\geq2$ and $\lambda_1$ is simple,
\[
\operatorname{tr}(S_W)>r_B\lambda_1.
\]
  Using \eqref{eq:traceSN}, the formula for $\lambda_1$, and $n=m+r_B$,
we obtain
\[
(m+1)c+\frac{\operatorname{tr}(A'')}{m}
 <\frac{c}{2}(n+m+1)-\frac{\operatorname{tr}(A'')}{2r_B}.
\]
Equivalently,
\[
\left(\frac{1}{m}+\frac{1}{2r_B}\right)\operatorname{tr}(A'')
 <\frac{c}{2}(r_B-1),
\]
which gives
\[
\operatorname{tr}(A'')<\frac{r_B(r_B-1)mc}{m+2r_B}.
\]

Finally, take $\alpha=\beta$  and $k=1$ in   \eqref{eq:cross}. Since
$T^1_{\alpha j}=0$ for $j\in P$,
\[
0=\frac{c}{2}
   +\frac{1}{2}\sum_{j\in P}|T^j_{\alpha1}|^2
   -\frac{3}{4}\sum_{\beta\in Q}|T^1_{\alpha\beta}|^2.
\]
Summing over $\alpha\in Q$ gives
\[
 3\sum_{\alpha,\beta\in Q}|T^1_{\alpha\beta}|^2
 =2mc+2\sum_{j\in P}\sum_{\alpha\in Q}|T^j_{\alpha1}|^2
 \geq2mc.
\]
Therefore
\[
\operatorname{tr}(A'')
 =\sum_{j\in P}\sum_{\alpha,\beta\in Q}
      |T^j_{\alpha\beta}|^2
 \geq\sum_{\alpha,\beta\in Q}|T^1_{\alpha\beta}|^2
 \geq\frac{2mc}{3}.
\]
This  proves \eqref{eq:SWmin}.
\end{proof}

Now we are ready to prove Theorem \ref{thm2} in the case when $c>0$ and $r_B<n$. We will divide the proof in two parts, depending on whether $[W,W]_c=0$ or $[W,W]_c\neq 0$. Recall that $V$ is the holomorphic tangent space of $M^n$ at any fixed point, the Chern torsion $T$ makes $V$ a complex Lie algebra, and $W=[V,V]_c$ is the commutator, with complex dimension $r_B$. So the condition $[W,W]_c=0$ means that $V$ is a $2$-step solvable Lie algebra. First let us deal with the $[W,W]_c=0$ case.

\begin{proposition}\label{prop2}
Let $(M^n,g)$ be a balanced BTP manifold   whose  Chern holomorphic
sectional curvature   is the positive constant $c$. Suppose that the
 $B$-rank satisfies $0<r_B<n$. Then $[W,W]_c=0$ cannot occur.
\end{proposition}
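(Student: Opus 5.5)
The plan is to argue by contradiction. Assume $[W,W]_c=0$, so $T^{\ast}_{ij}=0$ for all $i,j\in P$. Combined with $T^{\alpha}_{\ast\ast}=0$, the only possibly nonzero torsion components are then $T^i_{\alpha k}$ and $T^i_{\alpha\beta}$, with $i,k\in P$ and $\alpha,\beta\in Q$. I will work in a frame satisfying \eqref{eq:e}. The aim is to play the structure of the $2$-step solvable Lie algebra $\mathfrak g=(V,[\,,\,]_c)$ against the spectral information already obtained in Lemma \ref{lemma-min} and Lemma \ref{lemma-SWmin}.

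\textbf{Step 1: structure of $\mathfrak g$.} By the Jacobi identity \eqref{eq:Jacobi} and $[W,W]_c=0$, the operators $L_\alpha|_W$ ($\alpha\in Q$) pairwise commute, since $[[\alpha,\beta]_c,w]_c=0$. By \eqref{eq:stabS2}, $T^i_{\alpha j}\neq 0$ forces $\lambda_i=\lambda_\alpha+\lambda_j$. Since $S_N>0$ by Lemma \ref{lemma-min}, each $L_\alpha|_W$ strictly raises the $S_W$-eigenvalue, so each $L_\alpha|_W$ is nilpotent. Two consequences follow:
\begin{itemize}
\item For the bottom eigenvector $e_1$ we recover $T^1_{\alpha j}=0$.
\item For any index $r$ with $\lambda_r=\lambda_{r_B}$ maximal, we get $T^j_{\alpha r}=0$ for all $j\in P$.
\end{itemize}
Plugging $k=r$ into \eqref{eq:cross} then gives
\[
\tfrac34\big(F_rF_r^\ast+E_rE_r^\ast\big)=\tfrac c2 I_m,
\]
where $F_r=(T^r_{\alpha j})$ and $E_r=(T^r_{\alpha\beta})$. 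I also record $S_NE_i+E_iS_N=\lambda_iE_i$ for every $i\in P$, with $E_i=(T^i_{\alpha\beta})$.

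\textbf{Step 2: compute traces with the new vanishing.} The vanishing makes $\operatorname{tr}A_W=\operatorname{tr}A'$ and $\operatorname{tr}B=2\operatorname{tr}A'+\operatorname{tr}A''$. A preliminary check shows that the crude trace of $S_W$ from \eqref{eq:S} is consistent with \eqref{eq:traceSN} and \eqref{eq:A'+A''}, so global traces alone give no contradiction. I will therefore use weighted sums. Multiply \eqref{eq:cross} (diagonal $\alpha=\beta$, summed over $\alpha$) by $\lambda_k$ and sum over $k\in P$. Then compare with the expression for $\sum_i\lambda_i(S_W)_{i\bar i}$ coming from $4S_W=2(n+1)c+B_W-A_W$. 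Finally rewrite every term $|T^i_{\alpha j}|^2\lambda_i$ as $(\lambda_\alpha+\lambda_j)|T^i_{\alpha j}|^2$, and every term $|T^i_{\alpha\beta}|^2\lambda_i$ as $(\lambda_\alpha+\lambda_\beta)|T^i_{\alpha\beta}|^2$, using \eqref{eq:stabS2}. This should convert everything into expressions in $S_N=\frac c2(m+1)I+\frac12A''$ paired against $A'$ and $A''$.

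\textbf{Step 3: reach the contradiction.} The goal is an identity of the form $\operatorname{tr}(A''\cdot A'')+(\text{positive})\cdot\operatorname{tr}A''=(\text{explicit multiple of } c)\cdot\operatorname{tr}A''$, or a similar relation. Combined with $\operatorname{tr}A''\geq 2mc/3$ and the strict upper bound in \eqref{eq:SWmin}, this should be inconsistent. The minimal eigenvalue $\lambda_1=(m+1)c+\operatorname{tr}(A'')/m$ and the relation $\lambda_1=\lambda_\alpha+\lambda_\beta$ on the support of the invertible matrix $E_1$ give additional lower bounds to feed in.

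The main obstacle is Step 3: finding the right weighted combination that does not collapse to an identity, as the unweighted traces do. If the $\lambda_k$-weighting fails, I would next try the pencil/determinant trick of Lemma \ref{lemma-SWmin} at the top eigenvalue, using the nilpotent commuting family $L_\alpha|_W$ to produce a common kernel vector on which \eqref{eq:cross} fails to be positive.
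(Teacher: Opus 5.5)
Your Step 1 is correct, and it already contains two ingredients of the paper's proof: $T^1_{\alpha j}=0$, and, at the top eigenvalue, the identity $\frac34(F_rF_r^\ast+E_rE_r^\ast)=\frac c2 I_m$. Steps 2--3, however, are not an argument. No weighted identity is actually derived, the target relation is only hoped for, and you say yourself that the decisive step is open.

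The fallback does not help either. Take a common null vector $w\in W$ of the commuting nilpotent family $L_\alpha|_W$. Since $[W,W]_c=0$, this $w$ is central, and \eqref{eq:cross} at $w$ gives the consistent positive identity $\frac34(F_wF_w^\ast+E_wE_w^\ast)=\frac c2 I_m$. Positivity does not fail there. So as written, the proposal does not prove the proposition.

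\textbf{The missing idea: a dimension bound from curvature.} The inputs you plan to use (\eqref{eq:cross}, \eqref{eq:S}, \eqref{eq:stabS2}) never produce it.
\begin{itemize}
\item By Lemma \ref{lemma-SWmin}, $\lambda_1$ is a simple eigenvalue of $S_W$.
\item By Lemma \ref{lemma-DABS}, every curvature derivation preserves $W$ and commutes with $S$. Hence it preserves $\C e_1$, so $R^b_{i\bar 1 1\bar j}=0$ for $2\le i,j\le r_B$.
\item Expand this with \eqref{eq:Rb}, using $[W,W]_c=0$ and $T^1_{\alpha j}=0$. The result is $\frac34\sum_{\alpha}T^j_{\alpha1}\overline{T^i_{\alpha1}}=\frac c2\delta_{ij}$. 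So $F=(T^j_{\alpha1})$ satisfies $F^\ast F=\frac{2c}{3}I_{r_B-1}$, and therefore $m\ge r_B-1$.
\end{itemize}

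\textbf{The contradiction comes from sandwiching $\lambda_{r_B}$, not from weighted traces.}
\begin{itemize}
\item Upper bound. Take the trace of your top-eigenvalue identity, with $a=\operatorname{tr}(F_{r_B}F_{r_B}^\ast)$ and $b=\operatorname{tr}(E_{r_B}E_{r_B}^\ast)$. This gives $a+b=\frac{2mc}{3}$. Since $e_{r_B}$ is central, $A_{r_B\bar r_B}=0$ and $B_{r_B\bar r_B}=2a+b\le\frac{4mc}{3}$. Then \eqref{eq:S} yields $\lambda_{r_B}\le\frac c6(3n+3+2m)$.
\item Some $T^i_{\alpha j}\neq0$. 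Otherwise the trace of \eqref{eq:A'+A''} gives $\operatorname{tr}A''=\frac{2cr_Bm}{3}$, which violates the strict upper bound in \eqref{eq:SWmin}.
\item Lower bound. For that nonzero component, \eqref{eq:stabS2} gives $\lambda_{r_B}\ge\lambda_i=\lambda_j+\lambda_\alpha\ge\lambda_1+\frac c2(m+1)\ge\frac c6(9m+13)$.
\end{itemize}
Together these give $4m+10\le 3r_B$. That inequality is not contradictory on its own; for instance $m=2$, $r_B=6$ satisfies it. It becomes impossible only when combined with $3r_B\le 3m+3$. This is why the curvature-derivation input is essential, and your plan never invokes it.
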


\begin{proof}
  {Set $m=n-r_B$, and choose } a local unitary frame   {$e$ satisfying
\eqref{eq:e}. Write
}\[
 {P=\{1,\ldots,r_B\},\qquad Q=\{r_B+1,\ldots,n\},
}\]
{and write $\lambda_a=S_{a\bar a}$. Since $W=\operatorname{Im}T$ } and
 $[W,W]_c=0$, we have
  \[
 {T^\alpha_{pq}=0
 \quad (1\leq p,q\leq n,\ \alpha\in Q),
 \qquad
 T^s_{ij}=0
 \quad (i,j\in P,\ 1\leq s\leq n).
}\]
  {By Lemmas \ref{lemma-min} and \ref{lemma-SWmin},
}\[
 {\lambda_\alpha\geq\frac{c}{2}(m+1)>0,\qquad
 r_B\geq2,\qquad \lambda_1<\lambda_2,
}\]
{and
}\[
 {\lambda_1=(m+1)c+\frac{\operatorname{tr}(A'')}{m},\qquad
 \frac{2mc}{3}\leq\operatorname{tr}(A'')<
 \frac{r_B(r_B-1)mc}{m+2r_B}.
}\]
{By \eqref{eq:lemma12-min-output},
$T^1_{\alpha j}=0$ for $\alpha\in Q$ and $j\in P$.
}

{Every curvature derivation } preserves $W$  and commutes with $S$ by
Lemma \ref{lemma-DABS}{. It therefore preserves the one-dimensional
minimum eigenspace $\C e_1$ } of $S_W${. Consequently,
}\[
 {R^b_{p\bar q1\bar j}=0
 \qquad (1\leq p,q\leq n,\ 2\leq j\leq r_B).
}\]
{For } $2\leq i,j\leq r_B$,  {formula \eqref{eq:Rb}, the condition
$[W,W]_c=0$, and \eqref{eq:lemma12-min-output} give
}\begin{align*}
{0
 }&{=R^b_{i\bar1 1\bar j} }\\
 &{=\frac{c}{2}\delta_{ij}
   -\frac{1}{2}\sum_sT^s_{i1}\overline{T^s_{1j}}}\\
 &{\quad-\frac{3}{4}\sum_s\left(
   T^1_{is}\overline{T^1_{js}}
   +T^j_{1s}\overline{T^i_{1s}}\right)}\\
 &{\quad+\frac{1}{4}\sum_s\left(
   T^j_{is}\overline{T^1_{1s}}
   +T^1_{1s}\overline{T^i_{js}}\right)}\\
 &{=\frac{c}{2}\delta_{ij}
   -\frac{3}{4}\sum_{\alpha\in Q}
   T^j_{\alpha1}\overline{T^i_{\alpha1}}.
}\end{align*}
  {Thus the matrix
}\[
 {F=(T^j_{\alpha1})_{\alpha\in Q,\ 2\leq j\leq r_B}
 \in M_{m,r_B-1}(\C)
}\]
  {satisfies
}\[
 {F^\ast F=\frac{2c}{3}I_{r_B-1}.
}\]
{In particular, $\operatorname{rank}F=r_B-1$, and hence
}\begin{equation}{\label{eq:rm}
 m\geq r_B-1.
}\end{equation}

{We next bound } the largest eigenvalue $\lambda_{r_B}$   {of } $S_W$.   {For
$\alpha\in Q$ } and  $j\in P$,   {Equation \eqref{eq:stabS2} gives
}\[
 {(\lambda_\alpha+\lambda_{r_B}-\lambda_j)T^j_{\alpha r_B}=0.
}\]
  {Since $\lambda_{r_B}\geq\lambda_j$ and $\lambda_\alpha>0$, it follows that
}\begin{equation}{\label{eq:prop2-max-input}
 T^j_{\alpha r_B}=0
 \qquad (\alpha\in Q,\ j\in P).
}\end{equation}
  {All components with output in $Q$ vanish; the components
$T^s_{r_Bi}$ with $i\in P$ vanish because $[W,W]_c=0$; and the components
$T^s_{r_B\alpha}$ vanish by skew-symmetry and
\eqref{eq:prop2-max-input}. Therefore
}\begin{equation}{\label{eq:prop2-Arr}
 A_{r_B\overline{r_B}}=\sum_{s,t=1}^n|T^s_{r_Bt}|^2=0.
}\end{equation}

{Set
}\[
 {a=\sum_{j\in P}\sum_{\alpha\in Q}|T^{r_B}_{\alpha j}|^2,
 \qquad
 b=\sum_{\alpha,\beta\in Q}|T^{r_B}_{\alpha\beta}|^2.
}\]
{Taking $k=r_B$ and $\beta=\alpha$ in \eqref{eq:cross}, using
\eqref{eq:prop2-max-input}, and summing over $\alpha\in Q$ gives
}\begin{equation}{\label{eq:prop2-ab}
 a+b=\frac{2mc}{3}.
}\end{equation}
{The definition of $B$ uses ordered input pairs. The $P\times P$
terms vanish, the two mixed blocks both contribute $a$, and the
$Q\times Q$ block contributes $b$. Hence
}\[
 {B_{r_B\overline{r_B}}
 =\sum_{p,q=1}^n|T^{r_B}_{pq}|^2
 =2a+b
 \leq2(a+b)=\frac{4mc}{3}.
}\]
{Together with \eqref{eq:S} and \eqref{eq:prop2-Arr}, this yields
}\begin{equation}{\label{eq:max1}
 \lambda_{r_B}
 =\frac{c}{2}(n+1)+\frac{1}{4}B_{r_B\overline{r_B}}
 \leq\frac{c}{6}(3n+3+2m).
}\end{equation}

{Define
}\[
{X=\sum_{i,j\in P}\sum_{\alpha\in Q}|T^i_{\alpha j}|^2.
}\]
{Summing the diagonal instance $\beta=\alpha$ of \eqref{eq:cross}
over } $k\in P$   {and $\alpha\in Q$, and interchanging the two $P$-indices
in one of the mixed sums, gives
}\begin{equation}{\label{eq:prop2-mixed-sum}
 0=\frac{cr_Bm}{2}+\frac{1}{2}X-\frac{3}{4}X-\frac{3}{4}\operatorname{tr}(A'')
  =\frac{cr_Bm}{2}-\frac{1}{4}X-\frac{3}{4}\operatorname{tr}(A'').
}\end{equation}
If   {$X=0$, then \eqref{eq:prop2-mixed-sum} gives
$\operatorname{tr}(A'')=2cr_Bm/3$. The strict upper bound in \eqref{eq:SWmin} would then
imply
}\[
{\frac{2}{3}<
 \frac{r_B-1}{m+2r_B},
}\]
{or equivalently
}\[
{2m+r_B+3<0,
}\]
which is   {impossible. Thus $X>0$.
}

{Choose } $i,j\in P$ and   {$\alpha\in Q$ such that
$T^i_{\alpha j}\neq0$. Equation \eqref{eq:stabS2} now gives
}\[
{\lambda_i=\lambda_j+\lambda_\alpha.
}\]
{Since $\lambda_{r_B}\geq\lambda_i$ and $\lambda_j\geq\lambda_1$, the
bounds from Lemmas \ref{lemma-min} and \ref{lemma-SWmin} imply
}\begin{align}
{\lambda_{r_B}
 }&{\geq\lambda_1+\lambda_\alpha \notag}\\
 &{\geq (m+1)c+\frac{2c}{3}
       +\frac{c}{2}(m+1)
 =\frac{c}{6}(9m+13).
 \label{eq:max2}
}\end{align}
{Combining \eqref{eq:max1} and \eqref{eq:max2} gives
}\[
{9m+13\leq3n+3+2m.
}\]
{Since $n=m+r_B$, this is
}\[
{4m+10\leq3r_B.
}\]
 On the other hand,  {\eqref{eq:rm} gives $3r_B\leq3m+3$. Thus the
assumption $[W,W]_c=0$ forces $m+7\leq0$, whereas
$m=n-r_B=\dim_{\C}N\geq1$. This contradiction proves that
$[W,W]_c=0$ cannot occur.}
\end{proof}

\begin{proposition}\label{prop3}
Let $(M^n,g)$ be a balanced BTP manifold with constant Chern
holomorphic sectional curvature $c>0$. If $0<r_B<n$, then
$[W,W]_c=0$.
\end{proposition}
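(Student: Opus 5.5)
The plan is to argue by contradiction: assume $0<r_B<n$ and $[W,W]_c\neq 0$, and play an upper bound on the largest eigenvalue of $S_W$ against a lower bound forced by a nonzero bracket inside $W$. Fix a frame $e$ as in \eqref{eq:e}, with $P,Q,m,\lambda_a$ as before. From Lemmas \ref{lemma-min} and \ref{lemma-SWmin} we have:
\[
\lambda_\alpha\geq \tfrac{c}{2}(m+1)\ (\alpha\in Q),\qquad r_B\geq 2,\qquad \lambda_1<\lambda_2,\qquad \lambda_1=(m+1)c+\tfrac{\operatorname{tr}(A'')}{m}\geq (m+1)c+\tfrac{2c}{3},
\]
and also $T^1_{\alpha j}=0$ by \eqref{eq:lemma12-min-output}.

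\emph{Lower bound.} Pick $i,j,s\in P$ with $T^s_{ij}\neq 0$. Then \eqref{eq:stabS2} gives $\lambda_s=\lambda_i+\lambda_j\geq 2\lambda_1$. Hence $\lambda_{r_B}\geq 2\lambda_1\geq 2(m+1)c+\tfrac{4c}{3}$, and this bound can be pushed up by $\tfrac{2}{m}(\operatorname{tr}A''-\tfrac{2mc}{3})$ if needed.

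\emph{Upper bound for $\lambda_{r_B}$.} As in Proposition \ref{prop2}, the first step is $A_{r_B\overline{r_B}}=0$, and this survives without $[W,W]_c=0$. Indeed, a nonzero $T^s_{r_Bt}$ forces $\lambda_s=\lambda_{r_B}+\lambda_t$, where $s\in P$ and $\lambda_t>0$ (for $t\in Q$ by Lemma \ref{lemma-min}, for $t\in P$ since $\lambda_t\geq\lambda_1>0$). This is impossible by maximality of $\lambda_{r_B}$. So \eqref{eq:S} gives $\lambda_{r_B}=\tfrac{c}{2}(n+1)+\tfrac14 B_{r_B\overline{r_B}}$, and $B_{r_B\overline{r_B}}=2a+b+d$ with $a,b$ as in Proposition \ref{prop2} and $d=\sum_{i,j\in P}|T^{r_B}_{ij}|^2$. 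Formula \eqref{eq:cross} with $k=r_B$ still gives $a+b=\tfrac{2mc}{3}$, since $T^{j}_{\alpha r_B}=0$. The new term is $d$.

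\emph{Bounding $d$ (main obstacle).} I would bound $d$ using \eqref{eq:Rb} on the diagonal entries $R^b_{i\overline{r_B}r_B\bar i}$ and $R^b_{r_B\overline{r_B}r_B\overline{r_B}}$. These are constrained by Lemma \ref{lemma-BRb}, and by the fact that curvature derivations preserve the eigenspaces of $S$ (Lemma \ref{lemma-DABS}), which kills the entries $R^b_{p\bar q k\bar \ell}$ with $\lambda_k\neq\lambda_\ell$. Summing these relations over $i\in P$ should express $d$ through $\operatorname{tr}$-type quantities bounded by $\operatorname{tr}(S_W)$ from \eqref{eq:traceSN}.

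If that does not close, the fallback is a trace count. Each nonzero $T^s_{ij}$ ($i,j,s\in P$) produces an eigenvalue $\geq 2\lambda_1$, and the nonvanishing of $X$ (shown as in Proposition \ref{prop2} via \eqref{eq:prop2-mixed-sum}) produces eigenvalues $\lambda_j+\lambda_\alpha$. Summing all eigenvalues of $S_W$ against
\[
\operatorname{tr}(S_W)=\tfrac{c}{2}r_B(n+m+1)-\tfrac12\operatorname{tr}(A'')
\]
then gives an inequality between $m$ and $r_B$.

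Finally, I would redo the $R^b_{i\bar 1 1\bar j}=0$ computation from Proposition \ref{prop2} while keeping the $W$-bracket terms. These terms involve $T^s_{i1}$, with output eigenvalue $\lambda_i+\lambda_1$. The aim is still to get a rank bound of the form $m\gtrsim r_B-1$, and combining it with the eigenvalue inequalities above should reach a contradiction with $m\geq 1$. I expect controlling $d$ and these extra bracket terms, that is, getting an upper bound on $\lambda_{r_B}$ sharp enough to beat $2\lambda_1$, to be the hard step.
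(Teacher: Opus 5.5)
\textbf{Verdict: there is a genuine gap.} Your proposal stops short of a contradiction, and the comparison it is built on cannot produce one.

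\textbf{What works.} Your centrality observation is correct and is the right starting point: $A_{r_B\overline{r_B}}=0$, i.e.\ $e_{r_B}$ is central. The step you call the main obstacle is in fact easy. Let $U$ be the $\lambda_{r_B}$-eigenspace of $S_W$. Every curvature derivation preserves $W=[V,V]_c$ and commutes with $S$, hence preserves $U$. So $R^b_{i\overline{r_B}r_B\bar i}=0$ whenever $e_i\perp U$. By centrality, \eqref{eq:Rb} turns this into $\sum_s|T^{r_B}_{is}|^2=\frac{2c}{3}$. Hence $B_{r_B\overline{r_B}}=\frac{2c}{3}(n-\dim U)$ and $\lambda_{r_B}\leq\frac{c}{3}(2n+1)$.

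\textbf{Why your comparison fails.} Set this against $\lambda_{r_B}\geq 2\lambda_1\geq 2(m+1)c+\frac{4c}{3}$. It only yields $2r_B\geq 4m+9$, which is satisfiable. For example, with $m=2$ and $r_B=9$ the two bounds read $\frac{22c}{3}\leq\lambda_{r_B}\leq\frac{23c}{3}$. Your fallback trace count and the hoped-for rank bound $m\geq r_B-1$ have no supporting argument once brackets inside $W$ are present. In $R^b_{i\bar 1 1\bar j}$ the extra terms involving $T^s_{i1}$ with $s\in P$ are uncontrolled. So as written no contradiction is reached.

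\textbf{The missing idea: a second, independent formula for the top eigenvalue.}
\begin{itemize}
\item Use the off-diagonal curvature entries as well. Let $a,b$ index $U$ and $p,q$ index $U^\perp$, and set $F_a=(T^a_{pq})_{p,q}$. By centrality of $U$, $0=R^b_{p\bar a b\bar q}$ reduces to $F_aF_b^*=\frac{2c}{3}\delta_{ab}I_{U^\perp}$.
\item This forces $\dim U=1$ and makes $F=F_{r_B}$ invertible. Thus $\lambda_{r_B}=\frac{c}{3}(2n+1)$ exactly.
\item Now apply \eqref{eq:stabS2} with output index $r_B$. It reads $\lambda_{r_B}F=S'F+FS'$ with $S'=S|_{U^\perp}$.
\item Multiply by $F^{-1}$ and take traces, the same trick as in Lemma \ref{lemma-SWmin}. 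This gives $(n-1)\lambda_{r_B}=2(\operatorname{tr}S-\lambda_{r_B})$, i.e.\ $\lambda_{r_B}=\frac{2}{n+1}\operatorname{tr}S=nc$.
\item Comparing, $nc=\frac{c}{3}(2n+1)$ forces $n=1$, a contradiction.
\end{itemize}

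This is the paper's argument. The paper runs it on the top eigenspace of $S$ restricted to $[W,W]_c$, but it works verbatim on your $W$. It uses only positivity of the spectrum of $S$ and never a nonzero bracket inside $W$, so your lower bound $2\lambda_1$ plays no role.
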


\begin{proof}
Work at a fixed point and write $V=T^{1,0}M$, $W=[V,V]_c$, and
$W_1=[W,W]_c$. Suppose for contradiction that $W_1\ne0$.
The subspace $W_1$ is an ideal of $V$. Indeed, $W$ is an ideal, and the
Jacobi identity gives
\[
 [x,[y,z]_c]_c=[[x,y]_c,z]_c+[y,[x,z]_c]_c\in[W,W]_c
 \qquad (x\in V,\ y,z\in W).
\]
Equation \eqref{eq:stabS} is the derivation identity
\[
 S[x,y]_c=[Sx,y]_c+[x,Sy]_c.
\]
Thus $S$ preserves both $W$ and $W_1$. Formula \eqref{eq:S} shows that
$S$ is self-adjoint; hence it also preserves $N=W^\perp$ and
$W_2=W\cap W_1^\perp$. We may therefore choose a unitary eigenbasis
of $S$ adapted to
\[
 V=W_1\oplus W_2\oplus N.
\]
By Lemma \ref{lemma-min} and the minimum-eigenvalue identity in
Lemma \ref{lemma-SWmin}, every eigenvalue of $S$ is positive.

Let $\lambda$ be the largest eigenvalue of $S|_{W_1}$, and let
$U\subset W_1$ be its eigenspace. We first show that $[U,V]_c=0$. If
$u\in U$ and $v$ is an eigenvector of $S$ with eigenvalue $\mu$, then
$[u,v]_c\in W_1$ and
\[
 S[u,v]_c=[Su,v]_c+[u,Sv]_c=(\lambda+\mu)[u,v]_c.
\]
Since $\mu>0$ and $\lambda$ is the largest eigenvalue of $S|_{W_1}$,
this forces $[u,v]_c=0$. The eigenvectors of $S$ span $V$, so
$[U,V]_c=0$.

We next prove that $\dim U=1$.
Every curvature endomorphism $D_{x\bar y}$ is a derivation and hence
preserves $W_1$. Under the present hypotheses it also commutes with $S$
by Lemma \ref{lemma-DABS}. Consequently $D_{x\bar y}(U)\subset U$.

Set $d=\dim U$ and choose an $S$-adapted unitary basis
$e_1,\ldots,e_n$ such that $e_1,\ldots,e_d$ span $U$. For
$1\leq a,b\leq d$ and $d<p,q\leq n$, preservation of $U$ gives
$R^b_{p\bar a b\bar q}=0$. Since $[U,V]_c=0$,
 the constant-HSC curvature formula \eqref{eq:Rb} reduces to
\[
 0=R^b_{p\bar a b\bar q}
 =\frac{c}{2}\delta_{ab}\delta_{pq}
  -\frac{3}{4}\sum_{\gamma=d+1}^n
       T^a_{p\gamma}\overline{T^b_{q\gamma}}.
\]
For $1\leq a\leq d$, put
$F_a=(T^a_{pq})_{p,q=d+1}^n$. Then
\[
 F_aF_b^*=\frac{2c}{3}\delta_{ab}I_{U^\perp}.
\]
If $d>1$, choose $a\ne b$.
The space $U^\perp$ is nonzero because $U\subset W$ and $r_B<n$.
The identity for $a=b$ makes $F_a$ invertible, whereas
$F_aF_b^*=0$ gives $F_b=0$, contradicting
$F_bF_b^*=\frac{2c}{3}I_{U^\perp}$. Therefore $d=1$.

Retain an $S$-adapted unitary basis with $U=\C e_1$, and write
$\lambda=S_{1\bar1}$ and
$S'=S|_{U^\perp}=\operatorname{diag}(\lambda_2,\ldots,\lambda_n)$.
For $F=(T^1_{pq})_{p,q=2}^n$, the preceding identity gives
\[
 FF^*=\frac{2c}{3}I_{n-1}.
\]
Since $e_1$ is central, all torsion components having $e_1$ as an
input vanish. Therefore
\[
 B_{1\bar1}=\operatorname{tr}(FF^*)=\frac{2c}{3}(n-1),
 \qquad A_{1\bar1}=0.
\]
Formula \eqref{eq:S} now gives
\begin{equation}\label{eq:29}
 \lambda=\frac{c}{2}(n+1)+\frac{1}{4}B_{1\bar1}
 =\frac{c}{3}(2n+1).
\end{equation}
Because the basis diagonalizes $S$, Equation \eqref{eq:stabS2} with
output index $1$ is equivalent to
\[
 \lambda F=S'F+FS'.
\]
The matrix $F$ is invertible, so
$\lambda I_{n-1}=S'+FS'F^{-1}$. Taking traces yields
\[
 (n-1)\lambda=2\operatorname{tr}(S')
 =2\bigl(\operatorname{tr}S-\lambda\bigr).
\]
By \eqref{eq:traceAB}, taking the trace of \eqref{eq:S} gives
$\operatorname{tr}S=\frac{c}{2}n(n+1)$.
Hence
\begin{equation}\label{eq:30}
 \lambda=\frac{2}{n+1}\operatorname{tr}S=nc.
\end{equation}
Equations \eqref{eq:29} and \eqref{eq:30} imply
$nc=\frac{c}{3}(2n+1)$, so $n=1$. This contradicts $0<r_B<n$ and
proves the proposition.
\end{proof}

Combining Propositions \ref{prop2} and \ref{prop3}, we get the proof of Theorem \ref{thm2} in the case when $c>0$ and $r_B<n$.

\vspace{0.3cm}

\section{The case when $c>0$ and $r_B=n$}

In this section, we will prove Theorem \ref{thm2} in the case $c>0$ and $r_B=n$. We want to show that this case cannot occur. The full rank condition on the $B$ tensor means that $[V,V]_c=V$, namely, $V$ is a perfect Lie algebra, so it must have a semisimple part. First let us remark that $V$ itself cannot be semisimple. Assume on the contrary that $V$ is semisimple{. Equation \eqref{eq:stabS} says that } $S$  {is a derivation, and every derivation of a semisimple Lie algebra } is traceless. On the other hand,   {Equation~\eqref{eq:traceAB} and \eqref{eq:S} give $\operatorname{tr}S=\frac{c}{2}n(n+1)>0$, a contradiction. Hence } $V$ cannot be semisimple.

Let us denote by ${\mathfrak r}$ the solvable radical of $V$, and write $K={\mathfrak r}^{\perp}$ for its orthogonal complement. Our first goal is to show that $K$ is actually a Lie subalgebra of $V$.

\begin{proposition} \label{prop4}
Let $(M^n,g)$ be a balanced BTP manifold whose Chern
holomorphic sectional curvature is a positive constant $c$ and whose
$B$-rank is $r_B=n$.  If $\mathfrak r$ is the solvable radical of the
pointwise torsion Lie algebra $\mathfrak g$, then
$K=\mathfrak r^\perp$ is a Lie subalgebra of $\mathfrak g$.
\end{proposition}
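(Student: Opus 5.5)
The strategy is to show that every curvature derivation $D_{u\bar v}$ preserves the splitting $V=K\oplus\mathfrak r$. Once that is known, we feed the resulting vanishing of mixed Bismut curvature components into the constant-HSC formula \eqref{eq:Rb}, and then contract it until we get a sum-of-squares identity for the torsion components $T^{\ell}_{ks}$ with $k,s$ in $K$ and output $\ell$ in $\mathfrak r$. Write $\mathfrak r$-indices as $\ell,\ell'$ and $K$-indices as $k,s,\dots$, in a unitary frame adapted to $K\oplus\mathfrak r$. The claim of Proposition \ref{prop4} is exactly that $T^{\ell}_{ks}=0$ for all such indices.

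\textbf{Step 1: invariance.} The solvable radical is a characteristic ideal, so every derivation of $\mathfrak g$ maps $\mathfrak r$ into $\mathfrak r$. Each $D_{u\bar v}$ is a derivation, and so is its adjoint $D_{u\bar v}^{\ast}=D_{v\bar u}$. Hence both $D_{u\bar v}$ and $D_{u\bar v}^\ast$ preserve $\mathfrak r$, and therefore $D_{u\bar v}$ also preserves $K=\mathfrak r^\perp$. The same reasoning applies to $S$, which is a self-adjoint derivation by \eqref{eq:stabS}. In particular $S$, and hence $B-A$ by \eqref{eq:S}, preserves $K$ and $\mathfrak r$, so we may also take the frame to diagonalize $S$. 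In index form, invariance under every $D_{p\bar q}$ gives, exactly as in Lemma \ref{lemma-kernelB},
\[
R^b_{p\bar q k\bar\ell}=0=R^b_{p\bar q \ell\bar k}\qquad (k\in K,\ \ell\in\mathfrak r,\ 1\le p,q\le n).
\]

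\textbf{Step 2: the contraction.} I would substitute \eqref{eq:Rb} into $R^b_{\ell'\bar k k\bar\ell}=0$ and sum over $k\in K$. This is the analogue of passing from \eqref{eq:cross} to \eqref{eq:A'+A''}, where $N$ there is replaced by $\mathfrak r$ here. The outcome should be a matrix identity on $\mathfrak r$ of the form
\[
 2c\,(\dim K)\, I_{\mathfrak r}+\Phi_1-3\Phi_2=0 .
\]
Here $\Phi_1$ and $\Phi_2$ are positive semidefinite Gram-type matrices built from the blocks $T^{\mathfrak r}_{\mathfrak r K}$, $T^{K}_{\mathfrak r K}$ and $T^{K}_{\mathfrak r\mathfrak r}$. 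A second contraction, this time of $R^b_{k'\bar\ell \ell\bar k}=0$ over $\ell\in\mathfrak r$, gives the companion identity on $K$. That identity involves the block $T^{\mathfrak r}_{KK}$ through the term $-\tfrac12\sum_s T^s_{ik}\overline{T^s_{j\ell}}$ and the $\tfrac14$-terms. Combining the two traced identities with $\operatorname{tr}A=\operatorname{tr}B$ from \eqref{eq:traceAB}, and with $\operatorname{tr}S=\tfrac{c}{2}n(n+1)$, should leave a relation in which $\sum_{k,s\in K,\ \ell\in\mathfrak r}|T^{\ell}_{ks}|^2$ appears with a sign opposite to a nonnegative quantity. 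That forces the sum to vanish.

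\textbf{Step 3: the semisimple part, and the main obstacle.} Step 2 is where I expect the difficulty. The $c$-terms do not obviously cancel, and the mixed contributions $T^{K}_{\mathfrak r K}$ could enter with the wrong sign. If the pure sum-of-squares bookkeeping fails, I would next use the $S$-grading. By \eqref{eq:stabS2}, $T^j_{ik}\neq0$ forces $\lambda_j=\lambda_i+\lambda_k$. Because $K$ and $\mathfrak r$ are both $S$-invariant, a nonzero $T^{\ell}_{ks}$ with $\ell\in\mathfrak r$ and $k,s\in K$ links eigenvalues across the two pieces. I would then argue as in Proposition \ref{prop3}: take an extremal eigenvalue of $S|_{\mathfrak r}$, show that its eigenspace is one-dimensional and invariant under all $D_{x\bar y}$, and derive incompatible formulas for that eigenvalue, one from \eqref{eq:S} and one from the trace argument $\lambda I=S'+FS'F^{-1}$. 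This contradiction is what I expect to kill the block $T^{\mathfrak r}_{KK}$, giving $[K,K]_c\subseteq K$.
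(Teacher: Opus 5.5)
Your Step 1 is correct and is exactly how the paper begins. The gap is everything after it. Step 2 is only a hope, and the contractions you propose provably cannot give $T^{\mathfrak r}_{KK}=0$. Put $a=\sum_{k,k'\in K,\,\ell\in\mathfrak r}|T^\ell_{kk'}|^2$, the squared norm of the block you want to kill, and $b=\sum_{k\in K,\,\ell,p\in\mathfrak r}|T^\ell_{kp}|^2$. Since $\mathfrak r$ is an ideal, the blocks $T^K_{\mathfrak r K}$ and $T^K_{\mathfrak r\mathfrak r}$ that you put into $\Phi_1,\Phi_2$ vanish. Also $\sum_{k\in K}T^k_{kq}=0$ for every $q$, because $\mathfrak g/\mathfrak r$ is semisimple. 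Your two contractions are the same sum, by $R^b_{i\bar jk\bar\ell}=R^b_{k\bar\ell i\bar j}$, and their trace works out to
\[
3a+b=2c\,\dim K\,\dim\mathfrak r .
\]
Here $a$ sits on the same side as the nonnegative $b$, opposite a positive constant, so you get only an upper bound. The identity $\operatorname{tr}A=\operatorname{tr}B$ is an index relabeling valid for every skew map, so it adds nothing.

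Worse, the quotient-trace identity $\sum_{\alpha,\beta\in K}R^b_{\alpha\bar\alpha\beta\bar\beta}=0$, expanded by \eqref{eq:Rb} without assuming the proposition, gives $a=c\,\dim K\,(\dim K+1)>0$. This is consistent with the display whenever $2\dim\mathfrak r\geq 3\dim K+3$. So no bookkeeping of traced identities of this kind can force $a=0$; the proposition is exactly where the contradiction of the full-rank case is hidden.

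Step 3 fails for a different reason. The centrality argument of Proposition \ref{prop3} needs every eigenvalue of $S$ to be positive, which came from Lemmas \ref{lemma-min} and \ref{lemma-SWmin} and requires $r_B<n$. Here $S|_K$ is the derivation induced on the semisimple quotient, hence inner and traceless, so $S$ has a nonpositive eigenvalue on $K$. The step ``$\mu>0$ forces $[u,v]_c=0$'' is therefore lost. The grading $\lambda_\ell=\lambda_k+\lambda_{k'}$ says nothing about $T^{\mathfrak r}_{KK}$ without control of the spectrum of $S|_K$.

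The missing ingredient is Lie-theoretic: the semisimplicity of $\mathfrak q=\mathfrak g/\mathfrak r$, used through Levi's theorem and through the tracelessness of its derivations. Let $C$ be the $\mathfrak r$-component of $[K,K]_c$. The paper then argues in four steps.
\begin{enumerate}
\item It uses the individual, uncontracted components $R^b_{\beta\bar j\alpha\bar i}=0$ with $\alpha,\beta\in K$ and $i,j\in\mathfrak r$. Via \eqref{eq:Rb} these reduce to $(C_{\alpha\beta},[r_i,r_j]_c)=0$. So $C$ takes values in $F=\mathfrak r\cap[\mathfrak r,\mathfrak r]_c^\perp\cong\mathfrak r/[\mathfrak r,\mathfrak r]_c$, which is a $\mathfrak q$-module.
\item Comparing the orthogonal section $\mathfrak q\to K$ with a Levi section shows that $C=d\bar\phi$ is a coboundary in $C^2(\mathfrak q;F)$.
\item It uses a contraction different from yours: the quotient trace of the mixed derivations, $\sum_{\gamma\in K}R^b_{\alpha\bar i\gamma\bar\gamma}=0$. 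This is linear rather than quadratic in $C$, and it is exactly $-\frac12(d^*C)^i{}_\alpha=0$.
\item Hence $\|C\|^2=(d\bar\phi,C)=(\bar\phi,d^*C)=0$.
\end{enumerate}
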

\begin{proof}
Choose unitary bases $(k_\alpha)_{\alpha=1}^s$ of $K$ and
$(r_i)_{i=1}^m$ of $\mathfrak r$.  Decompose the bracket of two
vectors in $K$ as
\begin{equation}\label{eq:full-bracket-KK}
 [k_\alpha,k_\beta]_c
 =\sum_\gamma L^\gamma_{\alpha\beta}k_\gamma
  +C_{\alpha\beta},
 \qquad
 C_{\alpha\beta}=\sum_iC^i_{\alpha\beta}r_i\in\mathfrak r .
\end{equation}
Thus $K$ is a Lie subalgebra if and only if $C=0$.

Every derivation of $\mathfrak g$ preserves its solvable radical.
Since the adjoint of a curvature derivation is again a derivation,
each curvature derivation preserves both $\mathfrak r$ and
$K=\mathfrak r^\perp$.  Consequently,
\[
 R^b_{\ast\bar\ast\alpha\bar i}=0.
\]
Until stated otherwise, Roman indices refer to the basis of $\mathfrak r$
and Greek indices to the basis of $K$.

We first locate the values of $C$ inside the radical.  Since
$\mathfrak r$ is an ideal, $[\mathfrak r,K]_c\subseteq\mathfrak r$, and hence
$T^\alpha_{\beta i}=0$.
Recall that the Hermitian inner product on $V$ is
$(x,y)=\langle x,\overline y\rangle$. Thus the four terms in
\eqref{eq:Rb} containing a torsion component with output in $K$ and one
input in $\mathfrak r$ vanish. From
$0=R^b_{\beta\bar j\alpha\bar i}$ and \eqref{eq:Rb}, we obtain
\begin{equation}\label{eq:mixed-radical-row}
 0=-\frac{1}{2}\big(C_{\alpha\beta},[r_i,r_j]_c\big).
\end{equation}
Put
\[
 \mathfrak r_1=[\mathfrak r,\mathfrak r]_c,
 \qquad F=\mathfrak r\cap\mathfrak r_1^{\perp}.
\]
The subspace $\mathfrak r_1$ is an ideal of $\mathfrak g$, because
the Jacobi identity gives, for $x\in\mathfrak g$ and $u,v\in\mathfrak r$,
\[
 [x,[u,v]_c]_c=[[x,u]_c,v]_c+[u,[x,v]_c]_c\in\mathfrak r_1.
\]
Equation \eqref{eq:mixed-radical-row} now says that
\begin{equation}\label{eq:C-first-layer}
 C\in\Lambda^2K^*\otimes F.
\end{equation}

We now prove that this $F$-valued obstruction is exact and coclosed.

Let $\mathfrak q=\mathfrak g/\mathfrak r$.  The quotient maps identify
$K$ with $\mathfrak q$ and $F$ with
$\mathfrak r/\mathfrak r_1$.  Since $\mathfrak r$ acts trivially on its
abelianization, the adjoint action induces a representation of
$\mathfrak q$ on $F$, which in the chosen bases is
\[
 \bar M_\alpha f=P_F[k_\alpha,f]_c,\qquad f\in F.
\]
Transport the bracket of $\mathfrak q$ to $K$:
\[
 [k_\alpha,k_\beta]_K
 =\sum_\gamma L^\gamma_{\alpha\beta}k_\gamma.
\]
Let $\sigma\colon\mathfrak q\to K\subset\mathfrak g$ be the inverse of the
quotient map.  By the Levi decomposition theorem
\cite{JacobsonLieAlgebras}, there is also a Lie algebra section
$\tau\colon\mathfrak q\to\mathfrak g$.  Write
$\phi=\sigma-\tau\colon\mathfrak q\to\mathfrak r$ and
$\bar\phi=P_F\phi$.  For $F$-valued cochains, fix the sign convention
\[
 (d\psi)(x,y)
 =x\mathbin{\cdot}\psi(y)-y\mathbin{\cdot}\psi(x)-\psi([x,y]_K), \ \ \ \ \ \ x,y\in K.
\]
For $x,y\in\mathfrak q$, the obstruction associated with the section
$\sigma$ is
\[
 [\sigma x,\sigma y]_c-\sigma[x,y]
 =[\tau x,\phi(y)]_c-[\tau y,\phi(x)]_c
  -\phi([x,y])+[\phi(x),\phi(y)]_c.
\]
The first three terms represent $d\bar\phi(x,y)$ modulo
$\mathfrak r_1$, while the last term lies in $\mathfrak r_1$.  After passing to
$\mathfrak r/\mathfrak r_1$ and using its fixed representative space $F$,
\eqref{eq:C-first-layer} therefore gives
\begin{equation}\label{eq:C-coboundary}
 C=d\bar\phi.
\end{equation}

It remains to show that $C$ is coclosed.  Choose an adapted unitary basis
$V=K\oplus F\oplus\mathfrak r_1$; from now on, $i,j$ index $F$.
Every curvature derivation induces a derivation of the semisimple quotient
$\mathfrak q$.  Such a derivation is inner and traceless
\cite{JacobsonLieAlgebras}; hence
\begin{equation}\label{eq:quotient-trace-zero}
 {\mathcal L}_{\alpha\bar{i}}
 :=\sum_{\gamma=1}^s
 R^b_{\alpha\bar{i}\gamma\bar\gamma}=0.
\end{equation}
Give the cochain spaces their induced positive Hermitian inner products:
\[
 \big({P},{Q}\big)_{C^2(K;F)}=\frac{1}{2}\sum_{\alpha,\beta,i}
 P^i_{\alpha\beta}\overline{Q^i_{\alpha\beta}},
 \qquad
 \big({\psi},{\varphi}\big)_{C^1(K;F)}=\sum_{\alpha,i}
 \psi^i_\alpha\overline{\varphi^i_\alpha}.
\]
For these inner products, a direct calculation gives
\begin{equation}\label{eq:d-star}
 (d^*C)^i{}_{\alpha}
 =-\sum_{\beta,j}C^j_{\alpha\beta}
     \overline{(\bar M_\beta)^j{}_{i}}
  -\frac{1}{2}\sum_{\beta,\gamma}C^i_{\beta\gamma}
     \overline{L^\alpha_{\beta\gamma}}.
\end{equation}

We now expand the left-hand side of
\eqref{eq:quotient-trace-zero} directly.  Formula \eqref{eq:Rb},
\eqref{eq:C-first-layer}, and the ideality of $\mathfrak r$ give
\begin{equation}\label{eq:quotient-trace-reduced}
\begin{aligned}
 {\mathcal L}_{\alpha\bar{i}}
={}&\frac{1}{2}\sum_{\beta,j}C^j_{\alpha\beta}
       \overline{(\bar M_\beta)^j{}_{i}}
 -\frac{3}{4}\sum_\beta C^i_{\alpha\beta}
       \overline{\sum_\gamma L^\gamma_{\gamma\beta}}\\
 &+\frac{1}{4}\sum_{\beta,\gamma}C^i_{\beta\gamma}
       \overline{L^\alpha_{\beta\gamma}}.
\end{aligned}
\end{equation}
The constant term vanishes because $K\perp F$.  In the first quadratic
term, ideality and \eqref{eq:C-first-layer} leave only the displayed
$F$-sum; its positive sign uses
$T^j_{i\beta}=-T^j_{\beta i}=-(\bar M_\beta)^j{}_i$.
The two omitted quadratic sums contain a component
$T^\alpha_{i\ast}$ or $T^\gamma_{i\ast}$ and vanish because
$\mathfrak r$ is an ideal.

If $\bar k_\beta$ is the class of $k_\beta$ in $\mathfrak q$, then
\[
 \sum_\gamma L^\gamma_{\gamma\beta}
 =-\operatorname{tr}\!\left(
   \operatorname{ad}_{\bar k_\beta}\big|_{\mathfrak q}
  \right)=0.
\]
Thus \eqref{eq:quotient-trace-reduced} and \eqref{eq:d-star} give
\begin{equation}\label{eq:curvature-d-star}
 {\mathcal L}_{\alpha\bar{i}}
 =-\frac{1}{2}(d^*C)^i{}_{\alpha}.
\end{equation}
Equations \eqref{eq:quotient-trace-zero} and
\eqref{eq:curvature-d-star} give $d^*C=0$.
Combining this with \eqref{eq:C-coboundary} yields
\[
 \|C\|^2
 =\big(d\bar\phi,C\big)_{C^2(K;F)}
 =\big(\bar\phi,d^*C\big)_{C^1(K;F)}=0.
\]
Hence $C=0$.  By \eqref{eq:full-bracket-KK}, $K$ is a subalgebra, and its
projection to $\mathfrak g/\mathfrak r$ is an isomorphism.  Thus $K$ is a
Levi subalgebra.
\end{proof}

Now we are ready to finish the proof of Theorem \ref{thm2} in the case when $c>0$ and $r_B=n$.

\begin{proof}[{\bf Proof of Theorem \ref{thm2} when $c>0$ and $r_B=n$.}]

Retain the notation of Proposition~\ref{prop4}.  Thus
$\alpha,\beta$ range from $1$ to $s$, and $q$ ranges over the combined
orthonormal basis of $V=K\oplus\mathfrak r$.  The quotient trace of each
curvature derivation is zero, so
\begin{equation}\label{eq:double-quotient-trace-zero}
 \sum_{\alpha,\beta=1}^s
 R^b_{\alpha\bar\alpha\beta\bar\beta}=0.
\end{equation}
By (\ref{eq:Rb}) we get
\[ R^b_{\alpha\bar\alpha\beta\bar\beta} = \frac{c}{2}(1+\delta_{\alpha \beta}) +\sum_{q=1}^n \big\{ -\frac{1}{2} |T^q_{\alpha \beta}|^2 + \frac{1}{4} |T^{\alpha}_{\beta q}|^2 + \frac{1}{4} |T^{\beta}_{\alpha q}|^2 -\frac{3}{4} T^{\alpha}_{\alpha q} \overline{T^{\beta}_{\beta q} } - \frac{3}{4} T^{\beta}_{\beta q} \overline{T^{\alpha}_{\alpha q} } \big\}.
\]
We have $T^{\alpha}_{\beta i}=0$ since $[{\mathfrak r}, K]_c\subseteq {\mathfrak r}$, and $T^j_{\alpha \beta}=0$ since $[K,K]_c\subseteq K$ by Proposition \ref{prop4}.   {Hence the terms with $q\in\mathfrak r$ vanish. For $q=\gamma\in K$, dummy-index relabeling shows that the three squared-norm sums have the same total coefficient
$-\frac{1}{2}+\frac{1}{4}+\frac{1}{4}=0$. Therefore, after summing over
$\alpha,\beta\in K$, } we get
\[
 \sum_{\alpha,\beta=1}^s
 R^b_{\alpha\bar\alpha\beta\bar\beta}
 =\frac{c}{2}s(s+1)
 -\frac{3}{2}\sum_{\alpha,\beta,\gamma}T^\alpha_{\alpha \gamma}\overline{T^\beta_{\beta \gamma}} .
\]
Since $K$ is semisimple, $\mbox{tr}\big( \mbox{ad}_{k_{\gamma}}\big) =0$ in $K$, hence $\sum_{\alpha}T^{\alpha}_{\alpha \gamma} =0$. Therefore
\begin{equation*}
\sum_{\alpha,\beta=1}^s
 R^b_{\alpha\bar\alpha\beta\bar\beta}
 =\frac{c}{2}s(s+1).
\end{equation*}
From our assumption,  $c>0$. Moreover, $s=\dim K>0$: otherwise  $V$
  would be both solvable and perfect, which is impossible for the nonzero
Lie algebra $V$. The last equation therefore contradicts
\eqref{eq:double-quotient-trace-zero}. This shows that in Theorem
\ref{thm2} the case $c>0$ and $r_B=n$ cannot occur.
 \end{proof}

Combining the conclusions of \S 3-5, we get the proof of Theorem \ref{thm2}.

\vspace{0.3cm}

\section{A balanced BTP example with nonconstant holomorphic sectional curvature} \label{sec:full-rank-example}
{Motivated by the difficulty of the full-rank case, Zhao and the second
named author proposed the following conjecture \cite{ZhaoZprivate}.
}

\begin{conjecture}[{\bf Full-rank $B$ conjecture}]\label{conj2}
{Let $(M^n,g)$ be a compact balanced BTP manifold. If
$\operatorname{rank}B=n$, then $g$ is Chern flat.
}\end{conjecture}

{They were able to confirm the above conjecture up to $n=4$ \cite{ZhaoZprivate}. But as it turns out, the conjecture is false when $n\geq 5$, and below we will present a counterexample for $n=5$.  Recall that a Hermitian metric is Bismut-Ambrose-Singer (BAS) if its Bismut connection has parallel torsion and parallel curvature; hence every BAS metric is BTP. The following compact balanced BAS example disproves Conjecture
\ref{conj2}.
}

\begin{proposition}\label{prop:full-rank-example}
{There exists a compact locally homogeneous Hermitian fivefold
$(M_\Gamma,J,g)$ which is balanced and BAS (hence BTP), and such that
}\[
 {B=\operatorname{diag}\left(2,2,2,\frac{3}{2},\frac{3}{2}\right)>0
}\]
{in a unitary frame. The metric $g$ is neither K\"ahler nor Chern flat,
and its Chern holomorphic sectional curvature is nonconstant.
}\end{proposition}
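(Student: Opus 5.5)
The plan is to build the example as a compact quotient $M_\Gamma=\Gamma\backslash G$ of a real Lie group $G$ with a left-invariant complex structure $J$ and left-invariant metric $g$. Balancedness, the BAS condition, and all curvature statements then reduce to linear algebra on the Lie algebra, and compactness reduces to finding a lattice.

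The proposal of the proof must be consistent with the rest of the paper. Since $B$ has full rank, the pointwise torsion Lie algebra $\mathfrak g=(V,[\,,\,]_c)$ must be perfect but, by Section 5, not semisimple. The natural smallest choice in dimension $5$ is $\mathfrak g=\mathfrak{sl}_2(\C)\ltimes\C^2$, with $\C^2$ the standard representation. This algebra is perfect, its radical is $\C^2$, and its Levi factor $K=\mathfrak{sl}_2$ is orthogonal to it. I would take $V=K\oplus\mathfrak r$ with a unitary basis of the form $\{e_1,e_2,e_3\}$ of $\mathfrak{sl}_2$ (Killing-unitary, e.g. $\tfrac{1}{\sqrt2}(H), E, F$ suitably scaled) and $\{e_4,e_5\}$ of $\C^2$. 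I would then scale the two blocks independently so that $B=\sum_iL_iL_i^\ast$ becomes $\operatorname{diag}(2,2,2,\tfrac32,\tfrac32)$. The two scaling parameters are fixed by requiring $B$ to be scalar on each block, which $SU(2)$-equivariance guarantees.

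Next I realize $T$ as the Chern torsion of a left-invariant Hermitian structure. For the real group I would take $G=SL_2(\C)\ltimes\C^2$, i.e.\ the complex Lie group with the given Lie algebra, and choose the complex structure so that the Chern torsion equals the prescribed $T$. Because this structure is Chern flat, it is not the right one; the example must not be Chern flat.

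So instead I would use the Bismut-Ambrose-Singer framework of Ni and Ni--Zheng \cite{Ni25,NiZ23}. By Ambrose--Singer theory, a BAS manifold is locally determined by the algebraic data $(V,T,R^b)$ at a point, where $R^b$ is an $\mathfrak h$-valued curvature. Here $\mathfrak h\subset\mathfrak u(V)$ is a Lie algebra of derivations of $\mathfrak g$ commuting with $B$ (Lemma~\ref{lemma-DABS}-type constraints), and the data must satisfy the first and second Bianchi-type identities, Lemma~\ref{lemma1}, and the stability identity \eqref{eq:stab}. I would take $\mathfrak h=\mathfrak{su}(2)$ acting diagonally by the adjoint on $K$ and by the standard representation on $\C^2$, which consists of derivations. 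I would then take $R^b$ to be an $\mathfrak h$-invariant symmetric tensor as in \eqref{eq:Rbsym}, determined up to one constant by invariance, and solve the algebraic Bianchi identity \eqref{eq:Rb-Rb} for it. The Nomizu construction then yields the transvection algebra $\mathfrak h\oplus V_\R$, hence a homogeneous model $G/H$. I expect this to be a real form of something like $SL_2(\C)\ltimes\C^2$ or $SU(2)\times$ a solvable group.

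For compactness, I would exhibit the model as a real Lie group $G'$ (for instance $SL_2(\C)\ltimes\C^2$ viewed as a real group with a non-bi-invariant structure, or its quotient by the compact $H$) that is unimodular and admits a cocompact lattice. Such a lattice exists because $SL_2(\C)$ has cocompact lattices (Borel), and $\Gamma\ltimes\mathcal O^2$ with $\Gamma\subset SL_2(\mathcal O)$ for an appropriate order gives one for the semidirect product. With $\Gamma$ in hand, I set $M_\Gamma=\Gamma\backslash G'$.

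Finally come the verifications. Balancedness, $\sum_sT^s_{si}=0$, follows from tracelessness of $\operatorname{ad}$ on the perfect algebra, since each $\operatorname{ad}_x$ is traceless. The metric is non-K\"ahler because $T\neq0$. To show it is not Chern flat and that the HSC is nonconstant, I compute $R=R^b-(\text{torsion terms})$ via \eqref{eq:Rb-R}. I then evaluate $H$ on $e_1$ and on $e_4$ and check that the two values differ; by Theorem~\ref{thm2} the HSC cannot be constant unless the metric is Chern flat, so it suffices to rule out flatness. The main obstacle is the third step: finding a nonzero $\mathfrak h$-invariant $R^b$ satisfying \eqref{eq:Rb-Rb} together with a matching compact model. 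If the one-parameter invariant family fails, I would enlarge $\mathfrak h$ to $\mathfrak{u}(2)$ acting with weights that keep $T$ invariant.
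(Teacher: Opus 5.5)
Your algebraic input matches the paper's: the same Lie algebra $\mathfrak{sl}_2(\C)\ltimes\C^2\cong\mathfrak{so}(3,\C)\ltimes\C^2$, the same normalization of $B$, and the same Bismut--Ambrose--Singer/Nomizu strategy. But the two steps that actually produce the example are either missing or wrong.

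\textbf{The curvature.} Every curvature endomorphism is a skew-Hermitian derivation of $T$. These derivations are exactly $\operatorname{ad}(\operatorname{span}_\R\{e_1,e_2,e_3\})\oplus\I\R P$, where $P$ is the projection onto $U$. Your primary ansatz $\mathfrak h=\mathfrak{su}(2)$ gives the one-parameter family
$D_{e_a\bar e_b}=\kappa\operatorname{ad}_{T(e_a,e_b)}$, $D_{e_a\bar u_p}=0$, $D_{u_p\bar u_q}=\kappa\operatorname{ad}_{z_{pq}}$, with $z_{pq}$ as in the paper. No member of this family satisfies \eqref{eq:Rb-Rb}. The instance $(i,j,k,\ell)=(e_1,e_1,e_2,e_2)$ forces $\kappa=1$. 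The instance $(u_1,u_1,u_2,u_2)$ then has left side $\frac34$ and right side $\frac32$. The discrepancy is removed exactly by adding $\frac34\delta_{pq}P$ to $D_{u_p\bar u_q}$. So $\mathfrak h=\mathfrak{su}(2)\oplus\mathfrak u(1)$, your fallback, is forced. Writing this curvature down and checking the Nomizu Jacobi and invariance identities, together with integrability of $J$, is the substance of the construction, and your proposal leaves it undone.

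\textbf{Compactness.} This is where the plan genuinely fails. The lattice $\Gamma\ltimes\mathcal O^2$ with $\Gamma\subset SL_2(\mathcal O)$ is not cocompact, because $SL_2(\mathcal O)$ is a non-uniform Bianchi group. In fact no uniform lattice of $SL_2(\C)$ preserves a lattice in $\C^2$, so $SL_2(\C)\ltimes\C^2$ has no cocompact lattice at all.

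More fundamentally, the model is not a group of this type. In the Nomizu algebra $[X,Y]_{\mathfrak h}=-R^b(X,Y)$, and $R^b(u,\bar u)\neq0$ on $U$, so the abelian ideal $\C^2$ does not survive. The transvection algebra is $\mathfrak{su}(3)\oplus\mathfrak{sl}(2,\C)_\R$ with isotropy $\mathfrak u(2)$. The simply connected model is $G/H$ with $G=\mathrm{PSU}(3)\times\mathrm{PSL}(2,\C)$ and $H\cong U(2)$. It fibres over $\mathbb H^3$ with simply connected compact $7$-dimensional fibre $\mathrm{SU}(3)/S^1$. Since there is no simply connected compact Lie group of dimension $7$, $G/H$ is not diffeomorphic to a Lie group. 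Hence no left-invariant realization $\Gamma\backslash G'$ exists.

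The missing idea is the paper's: place a torsion-free uniform lattice $\Gamma<\mathrm{PSL}(2,\C)$ in the second factor only, for instance the holonomy group of a closed hyperbolic $3$-manifold. Then $\Gamma\backslash G\cong\mathrm{PSU}(3)\times\Gamma\backslash\mathrm{PSL}(2,\C)$ is compact. $\Gamma$ acts freely on $G/H$ because $H$ is compact and $\Gamma$ is torsion-free. So $M_\Gamma=\Gamma\backslash G/H$ is the required compact fivefold.

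Your final step, using Theorem~\ref{thm2} to reduce non-constancy of the holomorphic sectional curvature to non-flatness, is logically fine once you compute $R_{u_1\bar u_1e_1\bar e_1}=\frac14\neq0$ from \eqref{eq:Rb-R}.
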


\begin{proof}
{We first give the infinitesimal Hermitian data. Let $V=\C^5$ have unitary
basis
}\[
 {(e_1,e_2,e_3,u_1,u_2),\qquad
 U=\operatorname{span}_{\C}\{u_1,u_2\}.
}\]
{Define a complex skew-symmetric tensor $T:\Lambda^2V\to V$ by
}\[
 {T(e_1,e_2)=e_3,\qquad
 T(e_2,e_3)=e_1,\qquad
 T(e_3,e_1)=e_2,
}\]
\[
 {T(e_a,u)=\rho(e_a)u,\qquad T(u_1,u_2)=0,
}\]
{where
}\[
{\rho(e_1)=-\frac{\I}{2}
 \begin{pmatrix}0&1\\1&0\end{pmatrix},\quad
 \rho(e_2)=-\frac{\I}{2}
 \begin{pmatrix}0&-\I\\ \I&0\end{pmatrix},\quad
 \rho(e_3)=-\frac{\I}{2}
 \begin{pmatrix}1&0\\0&-1\end{pmatrix}.
}\]
{These brackets satisfy the Jacobi identity and define the Lie algebra
$\mathfrak{so}(3,\C)\ltimes\C^2$. This Lie algebra is perfect and hence
unimodular. Thus
}\[
{\sum_sT^s_{si}=0,
}\]
{which is the balanced condition. If $L_x(y)=T(x,y)$, direct calculation from
the definitions in }\S 2 {gives
}\[
{A=\sum_iL_{e_i}^*L_{e_i}
 =\operatorname{diag}\left(\frac{5}{2},\frac{5}{2},\frac{5}{2},
                            \frac{3}{4},\frac{3}{4}\right)
}\]
{and
}\[
{B=\sum_iL_{e_i}L_{e_i}^*
 =\operatorname{diag}\left(2,2,2,\frac{3}{2},\frac{3}{2}\right).
}\]
{Since a K\"ahler metric has $T=0$, whereas $B>0$ forces $T\ne0$, the
metric $g$ is not K\"ahler.
}

{We next prescribe the Bismut curvature. Let $P$ vanish on
$\operatorname{span}_{\C}\{e_1,e_2,e_3\}$ and be the identity on $U$. Put
}\[
{z_{11}=-\frac{\I}{2}e_3,\qquad
 z_{22}=\frac{\I}{2}e_3,\qquad
 z_{12}=-\frac{\I}{2}e_1+\frac{1}{2}e_2,\qquad
 z_{21}=-\frac{\I}{2}e_1-\frac{1}{2}e_2,
}\]
{and define complex-linear endomorphisms of $V$ by
}\[
{D_{e_a\bar e_b}=\operatorname{ad}_{T(e_a,e_b)},\qquad
 D_{e_a\bar u_p}=D_{u_p\bar e_a}=0,
}\]
\[
{D_{u_p\bar u_q}
 =\operatorname{ad}_{z_{pq}}+\frac{3}{4}\delta_{pq}P.
}\]
{Set $R^b(e_i,\bar e_j)|_V=D_{e_i\bar e_j}$ and let the $(2,0)$ and
$(0,2)$ parts of $R^b$ vanish. Extend $R^b$ to the complexification of the
underlying real space by reality and metric skew-adjointness. The associated
real Bismut torsion is determined by
}\[
{T^b(e_i,e_j)=-T(e_i,e_j),
}\]
\[
{T^b(e_i,\bar e_j)
 =\sum_k\left(T^j_{ik}\bar e_k-\overline{T^i_{jk}}e_k\right),
}\]
{together with skew-symmetry and conjugation.
}

{Substitution of the displayed matrices shows that $T^b$ is a three-form,
$R^b(X,Y)$ is skew-adjoint and complex linear, and the induced complex
structure is integrable. The same substitution gives
}\[
{\mathfrak S_{X,Y,Z}
 \left\{R^b(X,Y)Z-T^b\bigl(T^b(X,Y),Z\bigr)\right\}=0,
}\]
\[
{\mathfrak S_{X,Y,Z}R^b\bigl(T^b(X,Y),Z\bigr)=0,
\qquad
 R^b(X,Y)\cdot T^b=R^b(X,Y)\cdot R^b=0.
}\]
{Consequently the Nomizu bracket on
$\mathfrak l=\mathfrak h\oplus\mathfrak m$, where
$\mathfrak m$ is the underlying real space of $V$ and $\mathfrak h$ is the
span of the curvature endomorphisms, is
}\[
 [{A_1,A_2}]{=A_1A_2-A_2A_1,\qquad }[{A,X}]{=A(X),
}\]
\[
 [{X,Y}]{_{\mathfrak h}=-R^b(X,Y),\qquad
 }[{X,Y}]{_{\mathfrak m}=-T^b(X,Y).
}\]
{The preceding identities are precisely the Jacobi and invariance conditions
for this construction; see \cite[Theorem 3.7]{BarbaroP}. The canonical
connection preserves} $g$ {and $J$ and has skew torsion $T^b$. Since $J$ is
integrable, it is the Bismut connection. Hence the resulting homogeneous
Hermitian metric is BAS.
}

{The curvature algebra is
$\mathfrak h\cong\mathfrak{su}(2)\oplus\mathfrak u(1)$. A computation of the
Killing form identifies the transvection algebra as
}\[
 {\mathfrak l\cong
 \mathfrak{su}(3)\oplus\mathfrak{sl}(2,\C)_{\R}.
}\]
{An effective Lie group with this Lie algebra is
}\[
{G=\mathrm{PSU}(3)\times\mathrm{PSL}(2,\C).
}\]
{Let $H\subset G$ be the image of $U(2)$ under
}\[
{Q\longmapsto
 \left(
  \left[\operatorname{diag}\bigl(Q,\det(Q)^{-1}\bigr)\right],
  \left[(\det Q)^{-1/2}Q\right]
 \right).
}\]
{The brackets denote projective classes, and the second component is
independent of the square-root choice. The kernel is
$\mu_3=\{\zeta I_2:\zeta^3=1\}$, so
$H\cong U(2)/\mu_3\cong U(2)$. } In particular, {$H$ is compact and closed,
and $X=G/H$ has real dimension ten.
}

{Choose a closed orientable hyperbolic three-manifold $N$ and let
}\[
{\Gamma<\mathrm{PSL}(2,\C)
}\]
{be its holonomy group. Thus $\Gamma$ is a torsion-free uniform lattice.
Embed $\Gamma$ in the second factor of $G$ and define
}\[
{M_\Gamma=\Gamma\backslash G/H.
}\]
{Since $H$ is compact, the action of $\Gamma$ on $G/H$ is proper. The
stabilizer of $gH$ is $\Gamma\cap gHg^{-1}$; it is finite, and therefore
trivial because $\Gamma$ is torsion-free. Moreover,
}\[
{\Gamma\backslash G
 \cong\mathrm{PSU}(3)\times
       \bigl(\Gamma\backslash\mathrm{PSL}(2,\C)\bigr)
}\]
{is compact. Hence $M_\Gamma$ is a smooth compact Hermitian fivefold, and
the invariant BAS structure descends to it.
}

{Finally, the comparison identity \eqref{eq:Rb-R} gives
}\[
{R_{u_1\bar u_1e_1\bar e_1}=\frac{1}{4}\ne0.
}\]
{Thus the Chern connection is not flat. It also gives
}\[
{R_{e_1\bar e_1e_1\bar e_1}=0,\qquad
 R_{u_1\bar u_1u_1\bar u_1}=\frac{3}{4}.
}\]
{The Chern holomorphic sectional curvature is therefore not constant, and
$(M_\Gamma,J,g)$ is a counterexample to Conjecture \ref{conj2}.
}\end{proof}

\vspace{0.6cm}

\noindent\textbf{Acknowledgments.}
The second named author would like to thank Haojie Chen, Shuwen Chen, Xiaolan Nie, Kai Tang, Bo Yang, and Xiaokui Yang for their interest and helpful discussions. He is very grateful to Quanting Zhao for their long collaboration on BTP manifolds and for numerous discussions.

\vspace{0.3cm}

\noindent\textbf{Generative AI disclosure.}
During the development of this work, the authors used ChatGPT and Codex to assist with exploratory computations, possible proof directions, and checking algebraic reductions. The authors are responsible for the conceptualization and writing of the article, and take full responsibility for the correctness of the contents.

\vspace{0.3cm}

\noindent\textbf{Declaration on competing interests.}
All authors declare that there are no competing interests for this article.

\vspace{0.3cm}

\noindent\textbf{Added in proof.}
The preprint was finished on August 1, 2026 and was submitted to a math journal on August 4, 2026. It was also  submitted to arXiv on the same day, but was put `on hold' for more than a month. On August 19, H. Wang published \cite{HWang} on arXiv in which he independently proved the main result of this article (the $c\neq 0$ case of Theorem \ref{thm1}).


\vspace{0.3cm}

\begin{thebibliography}{99}


\bibitem {AndradaV}  A. Andrada and R. Villacampa, \emph{Bismut connection on Vaisman manifolds,} Math. Zeit. {\bf 302} (2022), 1091-1126.

\bibitem  {AOUV}  D. Angella, A. Otal, L. Ugarte, R. Villacampa,  \emph{On Gauduchon connections with K\"ahler-like curvature,}  Commun. Anal. Geom. \textbf{30} (2022), no.\,5, 961-1006.


\bibitem {ADM} V. Apostolov, J. Davidov, and O. Muskarov, \emph{Compact self-dual Hermitian surfaces,} Trans. Amer. Math. Soc. {\bf 348} (1996),  3051-3063.

\bibitem{Balas} A. Balas, \emph{Compact Hermitian manifolds of constant holomorphic sectional curvature, } Math. Zeit. {\bf 189} (1985),  193-210.

\bibitem{BarbaroP} {G. Barbaro and F. Pediconi,
\emph{On Bismut-Ambrose-Singer manifolds,}
arXiv: 2605.02485.
}

 \bibitem{BG} A. Balas and P. Gauduchon, \emph{Any Hermitian metric of constant nonpositive (Hermitian) holomorphic sectional curvature on a compact complex surface is K\"ahler, } Math. Zeit. {\bf 190} (1985), 39-43.




\bibitem {Boothby} W. Boothby, \emph{Hermitian manifolds with zero curvature,} Michigan Math. J. {\bf 5} (1958), no.2, 229-233.

 \bibitem {CCN} H. Chen, L. Chen, and X. Nie, \emph{Chern-Ricci curvatures, holomorphic sectional curvature and  Hermitian metrics,}  Sci. China Math. {\bf 64} (2021), 763-780.




\bibitem {ChenZ26} S. Chen and F. Zheng, \emph {Bismut torsion parallel metrics with constant holomorphic sectional curvature,} arxiv:2405.09110.





\bibitem{DGM} J. Davidov, G. Grantcharov, and O. Muskarov, \emph{Curvature properties of the Chern connection of twistor spaces,}
Rocky Mt. J. Math. {\bf 39} (2009), no.\,1,  27-48.

















\bibitem{HW} Z. Huang and X. Wan, \emph{Compact locally conformal K\"ahler manifolds with constant Chern holomorphic sectional curvature,} arXiv: 2606.24425

\bibitem{JacobsonLieAlgebras}
N. Jacobson.
\newblock {\em Lie Algebras}.
\newblock Dover Publications, New York, 1979.
\newblock Reprint of the 1962 Interscience edition.

\bibitem{LZ} Y. Li and F. Zheng, \emph{Complex nilmanifolds with constant holomorphic sectional curvature,}  Proc. Amer. Math. Soc. {\bf 150} (2022), 319-326.

\bibitem{MN} F. Ma and X. Nie, \emph{A remark on compact balanced threefolds with constant holomorphic sectional curvature,} Kodai Math. J. {\bf 47} (2024), no.\,1, 90-98.

\bibitem{Ni25} {L. Ni, \emph{Holonomy and the Ricci curvature of complex
Hermitian manifolds,} J. Geom. Anal. {\bf 35} (2025), Paper No.\,30.}

\bibitem{NiZ23} {L. Ni and F. Zheng, \emph{On Hermitian manifolds whose
Chern connection is Ambrose--Singer,} Trans. Amer. Math. Soc. {\bf 376}
(2023), no.\,9, 6681--6707.}


\bibitem{PodestaZ} F. Podest\`a, F. Zheng, \emph{A note on compact homogeneous manifolds with Bismut parallel torsion,} Sci. China Math. {\bf 68} (2025), no.\,7, 1643-1670.


\bibitem{RZ} P. Rao and F. Zheng, \emph{ Pluriclosed manifolds with constant holomorphic sectional curvature,} Acta. Math. Sinica (English Series) {\bf 38} (2022), no.6, 1094-1104.






 \bibitem {Tang} K. Tang, \emph{Holomorphic sectional curvature and K\"ahler-like metric} (in Chinese),  {Scientia Sinica Mathematica {\bf 51} (2021), no.\,12, 2013--2024.}



\bibitem {HWang} H. Wang, \emph{Bismut-torsion-parallel Hermitian manifolds with constant Chern holomorphic sectional curvature,}   arXiv: 2608.13386


\bibitem {WZ} Q. Wang and F. Zheng, \emph{Balanced Bismut torsion-parallel fourfold with constant holomorphic sectional curvature,}   arXiv: 2608.00457



\bibitem {WYZ} Q. Wang, B. Yang, and F. Zheng, \emph{On Bismut flat manifolds,} Trans. Amer. Math. Soc. {\bf 373} (2020), 5747-5772.

\bibitem {YangZ} B. Yang and F. Zheng, \emph{On curvature tensors of Hermitian manifolds,} Comm. Anal. Geom. {\bf 26} (2018), no. 5, 1193-1220.






\bibitem {ZZCrelle} Q. Zhao and F. Zheng, \emph{Strominger connection and pluriclosed metrics,}  J. Reine Angew. Math. (Crelles) {\bf 796} (2023),
245-267.



\bibitem {ZhaoZ24} Q. Zhao and F. Zheng, \emph{Curvature characterization of Hermitian manifolds with Bismut parallel torsion,} arXiv: {2407.10497}, to appear in Trans. Amer. Math. Soc.

\bibitem {ZhaoZ25} Q. Zhao and F. Zheng, \emph{On balanced Hermitian threefolds with parallel Bismut torsion,} arXiv:2506.15141

\bibitem {ZhaoZprivate} {Q. Zhao, private communication.}




\bibitem {ZhouZ}  W. Zhou and F. Zheng, \emph{ Hermitian threefolds with vanishing real bisectional curvature} (in Chinese), Scientia Sinica Mathematica {\bf 52} (2022), no.7, 757-764. English version: arXiv: 2103.04296

\end{thebibliography}
\end{document}